\definecolor{Gray}{gray}{0.85}
\definecolor{LightCyan}{rgb}{0.88,1,1}
\definecolor{dargray}{rgb}{0.18, 0.18, 0.18}
\definecolor{darkgreen}{rgb}{0.01,0.6,0.1}
\definecolor{lightrose}{rgb}{0.996,0.75,0.793}
\definecolor{rose}{cmyk}{0.75, 0.75, 0,0}
\definecolor{winered}{rgb}{0.6,0.1,0.1}
\definecolor{lightyellow}{rgb}{1, 1, 0.6}
\definecolor{transparent}{rgb}{1,1,1}
\definecolor{lightlightgray}{rgb}{0.88, 0.88, 0.88}
\definecolor{lightgray}{rgb}{0.8, 0.8, 0.8}
\definecolor{lightblue}{rgb}{0.527,0.805,0.977}
\definecolor{lightgreen}{rgb}{.74,1,0}
\newcolumntype{a}{>{\columncolor{Gray}}c}
\newcolumntype{b}{>{\columncolor{white}}c}
\newcommand{\clR}[1]{\textcolor{black}{#1}}
\newcommand{\clB}[1]{\textcolor{black}{#1}}
\newcommand{\clG}[1]{\textcolor{black}{#1}}
\newcommand{\clO}[1]{\textcolor{black}{#1}}
\pgfplotsset{compat=1.3}
\newtheorem{corollary}{Corollary}
\newtheorem{lemma}{Lemma}
\newtheorem{proposition}{Proposition}
\newtheorem{theorem}{Theorem}
\newtheorem{claim}{Claim}
\crefname{table}{Table}{Tables}
\crefname{figure}{Figure}{Figures}
\crefname{theorem}{Theorem}{Theorems}
\crefname{corollary}{Corollary}{Corollaries}
\crefname{observation}{Observation}{Observations}
\crefname{lemma}{Lemma}{Lemmas}
\crefname{reduction}{Reduction}{Reductions}
\crefname{construction}{Construction}{Constructions}
\crefname{subsection}{Section}{Sections}
\crefname{section}{Section}{Sections}
\crefname{proposition}{Proposition}{Propositions}
\crefname{claim}{Claim}{Claims}
\theoremstyle{definition}
\newtheorem{definition}{Definition}
\newtheorem{example}{Example}
\crefname{example}{Example}{Examples}
\crefname{definition}{Definition}{Definitions}
\newcommand{\ppp}{{\mathcal{P}}}
\newcommand{\vvv}{{\mathcal{V}}}
\newcommand{\prefs}{(\pref_1,\pref_2,\ldots, \pref_n)}
\newcommand{\dde}[1][]{{\ifthenelse{\equal{#1}{}}{$d$}{$#1$}-dimensional Euclidean}\xspace}
\newcommand{\dder}[1][]{{\ifthenelse{\equal{#1}{}}{$d$}{$#1$}-dimensional Euclidean representation}\xspace}
\newcommand{\dEuclid}[1][]{{\ifthenelse{\equal{#1}{}}{$d$}{$#1$}-Euclidean}\xspace}
\newcommand{\pref}{\ensuremath{\succ}}
\newcommand{\defvoter}{\text{voter}}
\newcommand{\topp}{\mathsf{top}}
\newcommand{\peak}{\mathsf{peak}}
\newcommand{\diffpairs}{\mathsf{diff}\text{-}\mathsf{pairs}}
\newcommand{\SSYT}{\mathsf{SSYT}}
\newcommand{\pos}{\mathsf{pos}}
\newcommand{\mytitle}{%
On the Number of Single-Peaked Narcissistic 
or Single-Crossing Narcissistic Preference Profiles\thanks{To appear in Discrete Mathematics.\newline Correspondence to: Jiehua Chen, Dept.\ Industrial Engineering and Management Ben Gurion University of the Negev Beer Sheva, 84105, Israel P.O.B 653}}
\author[1]{Jiehua Chen}
\author[2]{Ugo P.\ Finnendahl}
\affil[1]{Ben-Gurion University of the Negev, Israel 
  \texttt{jiehua.chen2@gmail.com}}
\affil[2]{Institut f\"ur Softwaretechnik und Theoretische Informatik,
  TU Berlin, Berlin, Germany
  \texttt{ugo.p.finnendahl@campus.tu-berlin.de}}
\date{}
\begin{document}

\sloppy
\allowdisplaybreaks

\title{\mytitle}

\maketitle
\begin{abstract}
  We investigate preference profiles for a set~$\vvv$ of voters,
  where each voter~$i$ has 
  a preference order~$\pref_i$ 
  on a finite set~$A$ of alternatives (that is, a linear order on~$A$) 
  such that for each two alternatives~$a,b\in A$, voter~$i$ \emph{prefers} $a$ to $b$ if $a\pref_i b$.
  Such a profile is \emph{narcissistic} if each alternative~$a$ is preferred the \emph{most} by at least one voter. 
  It is \emph{single-peaked} if there is a linear order~$\triangleright^{\text{sp}}$ on the alternatives 
  such that each voter's preferences on the alternatives along the order~$\triangleright^{\text{sp}}$ 
  are either strictly increasing, or strictly decreasing, or first strictly increasing and then strictly decreasing. 
  It is \emph{single-crossing} 
  if there is a linear order~$\triangleright^{\text{sc}}$ on the voters such that
  each pair of alternatives divides the order~$\triangleright^{\text{sc}}$ into at most two suborders, 
  where in each suborder, all voters have the same linear order on this pair. 
  We show that for $n$ voters and $n$ alternatives,
  the number of single-peaked narcissistic profiles is $\prod_{i=2}^{n-1} \binom{n-1}{i-1}$
  while the number of single-crossing narcissistic profiles is
  $2^{\binom{n-1}{2}}$.

\bigskip

\noindent
\textbf{Keywords:} 
  Narcissistic preferences,
  single-peaked preferences,
  single-crossing preferences, 
  semi-standard Young tableaux.
\end{abstract}

\section{Introduction}\label{sec:intro}

We deal with permutations of an $n$-element set~$A\coloneqq \{1,2,\dots,n\}$ that satisfy some specific properties.
These properties arise from social choice theory, 
where each permutation is interpreted as the preference order of an individual on the set~$A$.
The elements of $A$ are called \emph{alternatives}.
In the following, we will first use terminology established in social choice,
and then introduce notions that are more commonly used in discrete mathematics.

Social choice theory, and voting theory in particular, 
deals with voters and their preferences on a set of alternatives. 
There, each voter~$i$ from a voter set~$\vvv$
has a \emph{preference order~$\pref_i$ on the set~$A$} (which is a linear order on $A$),
such that for each two alternatives~$a,b\in A$,
voter~$i$ \emph{prefers}~$a$ to~$b$ if $a \pref_i b$ holds.

When forming coalitions~\cite{Demange1994,BraJonKil2002}, 
building teams~\cite{BarTri1986,BreCheFinNie2017}, 
or playing games, the individuals, who we jointly denote as \emph{voters}, 
may have preferences over who is better than another as a potential coalition partner, 
a team member, or a player.
In such situations, the voters and alternatives are identical, that is, $A=\vvv$.
Deriving from a simple psychological model,
it seems natural to assume that 
each voter is \emph{narcissistic}~\cite{BarTri1986}, 
meaning that she is her own ideal and, thus, \emph{most preferred alternative}, that is, 
for each voter~$i\in \vvv$ and each alternative~$b\in \vvv\setminus \{i\}$, it holds that $i\pref_i b$.

Another well-studied property of voters preference orders on the set~$A$ of alternatives,
the \emph{single-peaked} property,
is characterized by a linear order~$\triangleright^{\text{sp}}$ of the alternatives,
where
for each voter~$i$,
her preferences along the order~$\triangleright^{\text{sp}}$ 
strictly increase until they reach the \emph{peak} which is her most preferred alternative,
and then strictly decrease, that is, 
for each alternative~$b\in A$, 
the set~$\{b\}\cup \{a \in A \mid a\pref_i b\}$ forms an interval in $\triangleright^{\text{sp}}$.
\citet{Black1948} introduced the concept of single-peakedness.
He observed that voters' political interests over the parties are single-peaked,
meaning that there is a left-to-right political spectrum of the parties 
such that each voter has a political ideal on this spectrum and the further away a party is from her ideal, 
the less she will like this party. 
Single-peaked preferences are also studied in psychology under the name \emph{unimodal orders}~\cite{Coombs1964,DoiFal1994}.
 
A third property, the \emph{single-crossing} property, 
requires that there is a linear order of the voters 
such that the preference orders of the voters on each pair of alternatives along this order 
change at most once, that is, 
there is a linear order~$\triangleright^{\text{sc}}$ of the voters
where for each two distinct alternatives~$a,b\in A$ and for each three distinct voters~$i,j,k\in \vvv$
with $i \triangleright^{\text{sc}}j\triangleright^{\text{sc}}k$,
if $a\pref_i b$ and $a\pref_k b$, then $a\pref_j b$. 
\citet{Mirrlees1971} and \citet{Roberts1977} introduced this concept in the field of economics.
They observed that voters' preferences on income taxation 
display a pattern that accords to their incomes,
and are thus single-crossing: 
When asked about the preferences over two tax rates~$x$ and $y$ with $x > y$,
if a voter~$v$ (the ``crossing'' spot) with medium income already changes from preferring~$x$ over~$y$
to preferring $y$ over~$x$,
then all voters with higher income than $v$ will also prefer $y$ over $x$.  
%
Single-crossingness goes back to the work of~\citet{Karlin1968} and is closely related to the partial ordered set on the set of all $n$-permutations, known as the \emph{weak Bruhat order}. We refer to the papers of~\citet{Abello1991,GaRe2008,BreCheWoe2013a} 
for more information.
See \cref{def:narcissistic,def:sp,def:sc} for a formal definition of the three properties we just introduced.

Research on restricted domains such as single-peaked or single-crossing preferences has been popular in  
political science, in psychology, in social choice, and quite recently in computational social choice.
We refer to the papers of \citet{BreCheWoe2016,ElkLacPet2016} for ample references to research on the two properties. 
Single-crossing preferences are not necessarily single-peaked, but
\citet{ST06} and \citet{BaMo2011} observed that single-crossing narcissistic preferences are single-peaked.
However, not all single-peaked narcissistic preferences are single-crossing.
For a simple illustration, the preferences of the following four voters are narcissistic.
\begin{quote}
\begin{tabular}{l@{}l@{}l@{}l@{}l@{}l@{}l@{}l@{}l@{}l@{}l@{}l}
    \defvoter~$v_1\colon~$ & $v_1$ & $~\succ_1~$ & $v_2$  & $~\succ_1~ $ & $v_3 $ & $~\succ_1~ $ & $v_4$,\\
    \defvoter~$v_2\colon~ $ & $v_2 $ & $~\succ_2~ $ & $v_3 $ & $~\succ_2~ $ & $v_4 $ & $~\succ_2~ $ & $v_1$,\\
    \defvoter~$v_3\colon~ $ & $v_3 $ & $~\succ_3~ $ & $v_2 $ & $~\succ_3~ $ & $v_4 $ & $~\succ_3~ $ & $v_1$,\\
    \defvoter~$v_4\colon~ $ & $v_4 $ & $~\succ_4~ $ & $v_3 $ & $~\succ_4~ $ & $v_2 $ & $~\succ_4~ $ & $v_1$.
\end{tabular}
\end{quote}

\noindent For instance, 
voter~$v_1$ is her most preferred alternative,
and $v_2$, $v_3$, and $v_4$ are voter~$v_1$'s
second most preferred, third most preferred, and least preferred alternative, respectively. 
These voter preferences are single-crossing, and also single-peaked, with respect to the order~$v_1\triangleright v_2 \triangleright v_3 \triangleright v_4$. See \cref{ex:not-sc-profile} for more information.

\noindent However, if we just swap the positions of $v_4$ and $v_1$ in the preference order of voter~$v_3$ to obtain 
\begin{quote}
\begin{tabular}{l@{}l@{}l@{}l@{}l@{}l@{}l@{}l@{}l@{}l@{}l@{}l}
  \defvoter~$v_3\colon~$ & $v_3$ & $~\succ_3~$ & $v_2$ & $~\succ_3~$ & $v_1$ & $~\succ_3~$ & $v_4$,
\end{tabular}
 \end{quote}

\noindent then the resulting voter preferences, together with voters~$v_1,v_2$, and $v_4$, are still single-peaked (with respect to the order~$\triangleright$) and narcissistic, but not single-crossing anymore. 
See \cref{ex:not-sc-profile} for further discussion.

In this work, we deal with preference profiles with $n$ voters who each have a preference order on all $n$ voters.
In general, there are $n!^n$ different preference profiles.
But how likely is it that one of these profiles will have some specific property?
For instance, the number of narcissistic profiles is $(n-1)!^n$.
So, one out of $n^n$ profiles is narcissistic.
\citet{LacLac2017} studied the likelihood of single-peaked preferences under some distribution assumption on the preference orders of the voters. 
However, we are interested in narcissistic profiles that are also single-peaked,
and that are also single-crossing.
More precisely, we investigate the numbers of narcissistic profiles that are also single-peaked (SPN), and of narcissistic profiles that are also single-crossing (SCN), respectively.
While it is quite straightforward to derive the number of SPN profiles,
this is not the case for SCN profiles.
Nonetheless, we are able to determine the number of SCN profiles with the help of \emph{semi-standard Young tableaux} (SSYT), by establishing a bijective relation between SSYTs and SCN profiles.

Our results are that for $n$ voters and $n$ alternatives,
the number of single-peaked narcissistic profiles is $\prod_{i=2}^{n-1} \binom{n-1}{i-1}$
while the number of single-crossing narcissistic profiles is
$2^{\binom{n-1}{2}}$.



\section{Basic Definitions and Fundamentals}\label{sec:basic}
In this section, we introduce basic terms from social choice~\cite[Chapter 4]{ArrSenSuz02}, combinatorics of permutations~\cite{Bona2004}, and Young tableaux~\cite{Stanley1999,Fulton1997,Yong2007}.

\subsection{Voters, alternatives, and preference orders}
Let $\vvv\coloneqq \{1,2,\ldots, n\}$ be a set of voters.
Since we are concerned with voters that have preferences over themselves,
$\vvv$~also plays the role of the set of alternatives.
A \emph{preference order}~$\pref$ on $\vvv$ is a strict linear order on~$\vvv$, 
that is, a binary relation on~$\vvv$ which is total, antisymmetric, and transitive.
Sometimes, we use the letters~$a,b,c,\ldots$ instead of the numbers~$1,2,\ldots$ to emphasize that we are considering the alternatives instead of the voters.
Given two disjoint subsets of alternatives~$A$ and $B$,
we use the notation~$A\pref B$
to express that a voter has a preference order~$\pref$ such that for each $a\in A$ and for each $b\in B$
it holds that $a\pref b$.
We simplify $A\pref B$ to $a\pref B$ if $A=\{a\}$ and $A\pref B$ to $A\pref b$ if $B=\{b\}$.

A preference profile~$\ppp(\vvv)$ of voter set~$\vvv$
is an $n$-tuple of preference orders for $\vvv$, that is,  $\ppp(\vvv) \coloneqq \prefs$,
where each $\pref_i$ represents the preference order of voter~$i$.
\begin{example}\label{ex:profile}
  If we rename the voters $v_i \mapsto i$ for all $i\in\{1, 2, 3, 4\}$ in the introductory example,
  then we obtain the following preference profile for the voter set~$\{1,2,3,4\}$:
  \begin{alignat*}{5}
    \defvoter~1\colon~ & 1 & ~\succ_1~ & 2 & ~\succ_1~ & 3 & ~\succ_1~ & 4,\\
    \defvoter~2\colon~ & 2 & ~\succ_2~ & 3 & ~\succ_2~ & 4 & ~\succ_2~ & 1,\\
    \defvoter~3\colon~ & 3 & ~\succ_3~ & 2 & ~\succ_3~ & 4 & ~\succ_3~ & 1,\\
    \defvoter~4\colon~ & 4 & ~\succ_4~ & 3 & ~\succ_4~ & 2 & ~\succ_4~ & 1.
\end{alignat*}
\end{example}

To describe the properties of preference profiles, 
for each preference order~$\pref$ and each subset of alternatives~$\vvv'\subseteq \vvv$,
we introduce the concept of \emph{top} alternatives~$i$ from $\vvv$ that are preferred over $\vvv'$.
 \text{For each preference order }$\pref$ \text{ and }  \text{for each subset } $\vvv'\subseteq \vvv$ \text{ of alternatives,}
 we define $\topp(\pref, \vvv')\coloneqq \{i \in \vvv \mid \forall j \in \vvv'\setminus \{i\} \text{ it holds that } i \pref j\}\text{.}$

For example, the top alternatives of preference order~$\pref_2$ with respect to $\{3,4\}$ are $2$ and $3$. 
Thus, $\topp(\pref_2, \{3,4\})=\{2,3\}$.

We use~$\peak(\pref)$ to denote the most preferred alternative in~$\pref$, that is, $\{\peak(\pref)\} \coloneqq \topp(\pref, \vvv)$.
We define the \emph{position of an alternative~$j$} in a preference order~$\pref$ in a common way, that is, 
it is one plus the number of alternatives that are preferred to her: $\pos(\pref, j)\coloneqq |\topp(\pref, \{j\})|\text{.}$


\subsection{Narcissistic profiles} \label{def:narcissistic}
We call a preference profile~$\ppp(\vvv)$ with voter set~$\vvv$ a \emph{narcissistic} profile if for each voter~$i\in \vvv$ it holds that she is her most preferred alternative, that is, 
 for each voter~$i\in \vvv$ it holds that $\peak(\pref_i) = i\text{.}$

\subsection{Single-peaked profiles} \label{def:sp}
Let $\ppp(\vvv)$ be a preference profile with voter set~$\vvv$,
 and let $\triangleright$ be a linear order on the set~$\vvv$.
We call a preference order~$\succ\in \ppp(\vvv)$ \emph{single-peaked with respect to~$\triangleright$} if for each two alternatives~$a,b\in \vvv$ it holds that
\[\text{if } a\triangleright b \triangleright \peak(\pref) \text{ or } \peak(\pref) \triangleright b \triangleright a\text{, then } b \pref a\text{.}\]
Accordingly, we call $\ppp(\vvv)$ \emph{single-peaked with respect to~$\triangleright$} if each preference order from $\ppp(\vvv)$ is single-peaked with respect to the order~$\triangleright$,  
and we call this order~$\triangleright$ a \emph{single-peaked order}~(for $\ppp(\vvv)$). 
As already mentioned in the introduction, 
\cref{ex:profile} is narcissistic and single-peaked with respect to the linear order~$1\triangleright 2 \triangleright 3 \triangleright 4$.

There are many equivalent definitions of the single-peaked property.
One of them is due to \citet{DoiFal1994}.
\begin{proposition}[\cite{DoiFal1994}]\label{prop:alternative-sp}
 Given a preference profile~$\ppp(\vvv)=\prefs$ and a linear order~$\triangleright$ on $\vvv$, the following statements are equivalent:
 \begin{compactenum}
   \item $\ppp(\vvv)$ is single-peaked with respect to $\triangleright$.
   \item For each voter~$i\in \vvv$, and for each alternative~$j\in \vvv$, 
   the top alternatives~$\topp(\pref_i, \{j\})$ form an interval in $\triangleright$.
 \end{compactenum}
\end{proposition}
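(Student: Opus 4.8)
The plan is to prove both implications of the equivalence, after first unpacking the set $\topp(\pref_i, \{j\})$. For a single alternative~$j$ the defining condition $\forall m \in \{j\}\setminus\{k\}\colon k \pref_i m$ is vacuous when $k=j$ and reads $k\pref_i j$ otherwise, so $\topp(\pref_i,\{j\})$ is exactly the upper contour set $\{k\in\vvv : k\preff_i j\}$ of~$j$. Thus the proposition asserts that voter~$i$ is single-peaked precisely when each of her upper contour sets is a $\triangleright$-interval. Writing $p=\peak(\pref_i)$ throughout, I record the fact that $p\in\topp(\pref_i,\{j\})$ for every~$j$, since the peak is weakly preferred to every alternative; this observation will drive the converse direction.

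For the direction $(1)\Rightarrow(2)$, I fix a voter~$i$ and an alternative~$j$ and show that $\topp(\pref_i,\{j\})$ contains every element lying $\triangleright$-between two of its members. So I take $a,b,c$ with $a\triangleright b\triangleright c$ and $a,c\in\topp(\pref_i,\{j\})$, and aim to prove $b\preff_i j$. The argument splits on where~$p$ sits relative to~$b$ in~$\triangleright$: if $p=b$ the claim is immediate since the peak is weakly most preferred; if $p\triangleright b$ then $p\triangleright b\triangleright c$ and single-peakedness gives $b\pref_i c\preff_i j$; and if $b\triangleright p$ then $a\triangleright b\triangleright p$ and single-peakedness gives $b\pref_i a\preff_i j$. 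In each case transitivity yields $b\preff_i j$, so the set is an interval.

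For $(2)\Rightarrow(1)$, I fix a voter~$i$ and two alternatives~$x,y$ satisfying $x\triangleright y\triangleright p$ or $p\triangleright y\triangleright x$, and must derive $y\pref_i x$. The key move is to examine the upper contour set $\topp(\pref_i,\{x\})$: it contains~$x$ trivially and contains~$p$ because the peak is weakly most preferred. Under either hypothesis~$y$ lies $\triangleright$-between~$x$ and~$p$, so the interval assumption forces $y\in\topp(\pref_i,\{x\})$, that is $y\preff_i x$; since $x\triangleright y$ implies $y\neq x$, this sharpens to $y\pref_i x$, as required.

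I expect no serious obstacle here, as the result is standard. The only care needed is in the case analysis on the position of~$p$ in the forward direction and in tracking strict versus weak preference, using $y\neq x$ to upgrade $\preff_i$ to $\pref_i$, together with the initial reduction of $\topp(\pref_i,\{j\})$ to an upper contour set, which is what makes the interval condition transparent.
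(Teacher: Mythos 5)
Your proof is correct: the reduction of $\topp(\pref_i,\{j\})$ to the weak upper contour set of~$j$, the three-way case analysis on the position of the peak in the direction $(1)\Rightarrow(2)$, and the use of $x,p\in\topp(\pref_i,\{x\})$ plus the interval hypothesis in the direction $(2)\Rightarrow(1)$ are all sound, and the strict/weak bookkeeping (upgrading $\preff_i$ to $\pref_i$ via $y\neq x$) is handled properly. Note, however, that the paper itself gives no proof of this proposition --- it is imported as a known result from \cite{DoiFal1994} --- so there is no in-paper argument to compare against; your write-up simply supplies the standard argument that the paper only cites.
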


\citet{DoiFal1994, EscLanOez2008} provided polynomial-time algorithms to determine whether a profile is single-peaked.
\citet{BaHa2011} characterized single-peaked profiles by two forbidden subprofiles:
\begin{proposition}[\cite{BaHa2011}]\label{prop:sp-characterization}
  A profile is single-peaked if and only if it contains
  neither a \emph{worst-subprofile} of three alternatives~$a, b, c$ and  three voters~$i, j, k$
  such that 
  \begin{align*}
    \defvoter~i\colon \{b,c\}\pref_i a\text{,} \quad  \defvoter~j\colon \{a,c\}\pref_j b\text{,}\quad    \defvoter~k\colon \{a,b\}\pref_k c\text{,}
  \end{align*}
  nor an $\alpha$-subprofile of four alternatives~$a, b, c, d$ and two voters~$i, j$
  such that 
    \begin{align*}
      \defvoter~i\colon \{a,b\} \pref_i c \pref_i d \text{, }  \defvoter~j\colon \{b,d\}\pref_j c \pref_j a\text{.}
    \end{align*}
\end{proposition}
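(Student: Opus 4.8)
The statement is a two-directional characterization, so I would prove each implication separately, with the ``only if'' direction being routine and the ``if'' direction carrying all the difficulty. The one tool I would fix at the outset is the following elementary principle, immediate from \cref{prop:alternative-sp}: if $\ppp(\vvv)$ is single-peaked with respect to $\triangleright$, then for every voter and every three alternatives the $\triangleright$-middle one of the three is never ranked below both of the other two (otherwise the top set of that voter with respect to the middle alternative would fail to be an interval). Conversely, a valley in some voter's order with respect to $\triangleright$ always exhibits such a middle-is-worst triple, so single-peakedness with respect to a fixed order~$\triangleright$ is in fact \emph{equivalent} to the requirement that the $\triangleright$-middle alternative of every triple is never the worst of that triple for any voter.

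For the ``only if'' direction I would rule out both forbidden subprofiles by repeatedly applying this principle. For the worst-subprofile on $\{a,b,c\}$, whichever alternative is the $\triangleright$-middle of the triple—say $b$—would have to be the worst of the triple for voter~$j$, a contradiction. For the $\alpha$-subprofile on $\{a,b,c,d\}$ I would apply the principle to the triples $\{a,c,d\}$, $\{a,b,c\}$, and $\{b,c,d\}$ to deduce, respectively, that $c$ lies between $a$ and $d$, that $b$ lies between $a$ and $c$, and that $b$ lies between $c$ and $d$. The last two force $a$ and $d$ onto the same side of $b$ with $c$ on the opposite side, which is incompatible with $c$ lying between $a$ and $d$; this contradiction finishes the direction.

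For the ``if'' direction I would construct the witnessing order~$\triangleright$ from a betweenness relation read off the profile. Declare, for pairwise distinct $x,y,z$, that $y$ is \emph{between} $x$ and $z$ if no voter ranks $y$ below both $x$ and $z$. Since each voter ranks exactly one of any three alternatives worst, the absence of a worst-subprofile guarantees that in every triple at least one alternative is between the other two. By the equivalence recorded above, finding a single-peaked axis amounts exactly to realising this relation by a line, i.e.\ choosing~$\triangleright$ so that the $\triangleright$-middle of every triple is a between-alternative. I would obtain such a~$\triangleright$ by verifying the standard axioms that make a ternary betweenness relation embeddable into a line and then reading off the order (an induction on $|\vvv|$, deleting an alternative and re-inserting it into the inductively obtained order, is an alternative route, but it is more delicate because the smaller axis need not be unique).

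The crux in either implementation is four-alternative consistency, and this is precisely where the $\alpha$-subprofile enters: the worst-restriction alone only produces a between-alternative \emph{locally} in each triple, and these local choices can clash on four points. I would show that any clash—equivalently, any violation of the transitivity axiom ``if $y$ is between $x,z$ and $z$ is between $y,w$, then $y$ is between $x,w$''—yields, after relabelling and restricting to the four alternatives and two witnessing voters, exactly an $\alpha$-subprofile (or, in the symmetric cases, a worst-subprofile), contradicting the hypothesis. The residual bookkeeping obstacle, which I expect to be the fussiest part, is the degenerate triples in which two alternatives are never worst (so a single alternative is worst for every voter): these ties leave freedom in the placement on the line and must be broken uniformly so as not to create new conflicts. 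Once $\triangleright$ is fixed, the triple principle (equivalently \cref{prop:alternative-sp}) certifies that the profile is single-peaked with respect to it.
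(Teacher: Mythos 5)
Your attempt cannot be compared against a proof in the paper, because the paper offers none: \cref{prop:sp-characterization} is imported verbatim from \cite{BaHa2011} as a known characterization, so your proposal has to stand as a self-contained argument. Its ``only if'' half does stand: reducing single-peakedness with respect to a fixed axis~$\triangleright$ to the principle that the $\triangleright$-middle of every triple is never the worst of that triple for any voter, then checking that a worst-subprofile makes every possible middle worst for some voter, and that the three triples $\{a,c,d\}$, $\{a,b,c\}$, $\{b,c,d\}$ of an $\alpha$-subprofile force incompatible betweenness constraints, is correct and complete. One small repair is needed in your justification of the principle via \cref{prop:alternative-sp}: the top set $\topp(\pref_v,\{y\})$ of the \emph{middle} alternative~$y$ always contains $y$ itself, so it can be an interval even when $y$ is ranked below both $x$ and $z$; the set that genuinely fails to be an interval is $\topp(\pref_v,\{z'\})$, where $z'$ is whichever of the two outer alternatives voter~$v$ ranks lower (this set contains $x$ and $z$ but not the $\triangleright$-intermediate $y$).

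The genuine gap is the ``if'' direction, which is where all the content of the theorem lies and which your proposal plans rather than executes. Defining ``$y$ is between $x$ and $z$ if no voter ranks $y$ below both'' and noting that worst-freeness gives every triple at least one between-alternative is a sound start, but the two claims that would finish the proof are exactly the theorem itself and are left unproven: (a) that the absence of $\alpha$- and worst-subprofiles forces this relation to be realizable by a linear order, and (b) that the degenerate triples (where two alternatives are never worst, so the middle is not pinned down) admit a globally consistent tie-breaking. For (a), realizability of a multi-valued betweenness relation needs more than the single transitivity axiom you quote, and the reduction of an axiom violation to a forbidden subprofile is not a relabelling exercise: a violation hands you one voter ranking $y$ below $x$ and $w$ together with the \emph{absence} of certain worst-voters in other triples, whereas an $\alpha$-subprofile demands a very specific two-voter pattern, including the strict comparisons $c\pref_i d$ and $c\pref_j a$ with a correct assignment of the four roles; producing that pattern requires a real case analysis, plausibly additional voters and triples, and quite possibly heavier machinery (an induction on the number of alternatives or an explicit axis-construction procedure). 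Claim (b) you yourself flag as the ``fussiest part'' and leave open. As it stands, the hard half of the equivalence is a proof obligation, not a proof.
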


\subsection{Single-crossing profiles} \label{def:sc}
Let $\ppp(\vvv)$ be a preference profile with voter set~$\vvv$,
and let $\triangleright$ be a linear order on the set~$\vvv$.
We call $\ppp(\vvv)$ \emph{single-crossing with respect to~$\triangleright$} if
for each pair~$\{a,b\}\subseteq \vvv$ of alternatives and for each three voters~$i, j, k \in \vvv$ with $i\triangleright j \triangleright k$, it holds that 
\[\text{if } a\pref_i b \text{ and } a\pref_k b \text{, then } a \pref_j b\text{.}\]
Accordingly, we call $\ppp(\vvv)$ a \emph{single-crossing profile} if there is 
a linear order~$\triangleright$ on the voter set~$\vvv$
with respect to which $\ppp(\vvv)$ is single-crossing,
and we call this order~$\triangleright$ a \emph{single-crossing order} (for $\ppp(\vvv)$).

Just as with single-peaked profiles, there are many equivalent definitions of the single-crossing property~\cite{DoiFal1994,BreCheWoe2013a,BreCheWoe2016}.
To introduce these alternative definitions, let $\diffpairs(\pref, \pref')$ denote the set of all pairs of alternatives that are ordered differently by $\pref$ and~$\pref'$:
\[
\diffpairs(\pref, \pref')\coloneqq \{\{i,j\}\subseteq \vvv\mid i\pref j \text{ and } j\pref' i\}\text{.}
\]

\begin{proposition}{\cite{DoiFal1994,BreCheWoe2013a,BreCheWoe2016}} 
 Given a preference profile~$\ppp(\vvv)=\prefs$ and a linear order~$\triangleright$ on $\vvv$, the following statements are equivalent:
 \begin{compactenum}
   \item $\ppp(\vvv)$ is single-crossing with respect to $\triangleright$.
   \item For each pair of alternatives~$\{a,b\}\subseteq \vvv$ 
   and for each two voters~$i,j\in \vvv$ with $i \triangleright j$, 
   it holds that $\diffpairs(\pref^*, \pref_i) \subseteq \diffpairs(\pref^*, \pref_j)$, 
   where $\pref^*$ denotes the preference order of the first voter in $\triangleright$.
   \item For each pair~$\{a,b\}\subseteq \vvv$ of alternatives,
   the voters that prefer~$a$ to $b$ form an interval in $\triangleright$ , 
   and the voters that prefer~$b$ to $a$ also form an interval in $\triangleright$, respectively.
 \end{compactenum}
\end{proposition}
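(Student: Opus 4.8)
The plan is to exploit that all three statements are quantified alternative-pair by alternative-pair, so it suffices to fix a single pair $\{a,b\}\subseteq\vvv$ and to analyse the two sets $A\coloneqq\{i\in\vvv\mid a\pref_i b\}$ and $B\coloneqq\{i\in\vvv\mid b\pref_i a\}$, which partition~$\vvv$ because each $\pref_i$ is a strict total order. I will show that, for this fixed pair, each of the three conditions is equivalent to the single structural statement that both $A$ and $B$ are intervals of~$\triangleright$. Since a bipartition of a linear order into two intervals is possible exactly when one part is a $\triangleright$-prefix and the other the complementary $\triangleright$-suffix (an interval meeting neither end would leave a disconnected complement), this statement is precisely ``the pair $\{a,b\}$ has a single crossing point''. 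Establishing the equivalence for every pair then yields the equivalence of (1), (2), and (3) at once.

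For statement (3) there is nothing to do: it literally asserts that $A$ and $B$ are intervals of~$\triangleright$. For statement (1), note that, restricted to the pair, the condition forbids three voters $i\triangleright j\triangleright k$ whose outer two lie on one side and whose middle one lies on the other; forbidding the pattern $A,B,A$ is exactly the statement that $A$ is an interval, and forbidding $B,A,B$ is exactly the statement that $B$ is an interval. Because $\{a,b\}$ is an \emph{unordered} pair, reading the definition with the two roles exchanged forbids both patterns, so (1) for the pair is equivalent to ``$A$ and $B$ are both intervals''. This already gives (1)$\Leftrightarrow$(3).

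It remains to connect (2) to the interval statement. Let $v^{*}$ be the $\triangleright$-first voter, so that $\pref^{*}=\pref_{v^{*}}$, and assume without loss of generality $a\pref^{*} b$, i.e.\ $v^{*}\in A$ (the case $b\pref^{*} a$ is symmetric, with the roles of $A$ and $B$ swapped). Then for any voter~$i$ we have $\{a,b\}\in\diffpairs(\pref^{*},\pref_i)$ if and only if $b\pref_i a$, that is, if and only if $i\in B$. Hence, projected onto the pair $\{a,b\}$, the nestedness in (2) says precisely that whenever $i\in B$ and $i\triangleright j$ we also have $j\in B$; equivalently, $B$ is $\triangleright$-upward closed. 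As $v^{*}$ is the $\triangleright$-minimum and lies in~$A$, this is equivalent to $A$ being the $\triangleright$-prefix and $B$ the complementary $\triangleright$-suffix, i.e.\ to $A$ and $B$ both being intervals. Running this equivalence over all pairs gives (2)$\Leftrightarrow$(3); together with (1)$\Leftrightarrow$(3), all three statements are equivalent.

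The only genuinely delicate point is the treatment of statement (2): one must use that $\pref^{*}$ is the \emph{first} voter (the $\triangleright$-minimum) to turn the monotone growth of the disagreement sets into a one-sided ``upward closed'' condition, and one must keep track of which side of each pair the first voter falls on. The remaining work—the observation that a two-block interval partition of a linear order is exactly a prefix/suffix split, and the care that (1) forbids \emph{both} sandwich patterns since the pair is unordered—is routine once the per-pair reduction is in place.
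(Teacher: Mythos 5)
Your proof is correct, but there is nothing in the paper to compare it against: the paper does not prove this proposition at all, it imports it from the cited works of Doignon and Falmagne and of Bredereck, Chen, and Woeginger. So your argument should be judged on its own, and it holds up. The per-pair reduction is legitimate since all three statements are conjunctions over pairs of alternatives; the key observation for (1)$\Leftrightarrow$(3) --- that, because $A$ and $B$ partition the voters, forbidding the sandwich pattern $A,B,A$ is literally the convexity of $A$, and forbidding $B,A,B$ (obtained by reading the unordered pair with the roles swapped) is the convexity of $B$ --- is exactly right; and your handling of (2) correctly isolates the one subtle point, namely that $\pref^*$ is the order of the $\triangleright$-first voter, so the side of the pair containing that voter is forced to be the prefix and the disagreement side the suffix, which matches the prefix/suffix dichotomy for a two-interval partition of a linear order. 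The only detail I would make explicit is the degenerate case in which all voters agree on a pair (one of $A$, $B$ empty): all three conditions hold vacuously there, and your dichotomy still applies because the empty set is an interval, so the proof is complete once that sentence is added. One could alternatively prove (1)$\Rightarrow$(2)$\Rightarrow$(3)$\Rightarrow$(1) cyclically, but your route of funneling everything through a single structural condition is cleaner and arguably the more illuminating one.
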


\citet{DoiFal1994}, \citet{ElkFalSli2012}, and \citet{BreCheWoe2013a} provided polynomial-time algorithms to determine whether a profile is single-crossing.
\citet{BreCheWoe2013a} characterized single-crossing profiles by two forbidden subprofiles:
\begin{proposition}[\cite{BreCheWoe2013a}]\label{prop:sc-characterization}
  A profile is single-crossing if and only if it contains
  neither a $\gamma$-subprofile of three (not necessarily disjoint) pairs of alternatives~$\{a,b\}$, $\{c,d\}$, $\{e,f\}$ and three voters~$i, j, k$
  such that 
  \begin{alignat*}{4}
    \defvoter~i\colon~& a\pref_i b& ~\text{ and }~ & c \pref_i d &~\text{ and }~ & e\pref_i f\text{,}\\
    \defvoter~j\colon~& b\pref_j a& ~\text{ and }~ & d \pref_j c & ~\text{ and }~ & e\pref_j f\text{,}\\
    \defvoter~k\colon~& a\pref_k b& ~\text{ and }~ & d\pref_k c & ~\text{ and }~ & f\pref_k e\text{,}
  \end{alignat*}
  nor a $\delta$-subprofile of two (not necessarily disjoint) pairs of alternatives,
  $\{a, b\}$ and $\{c, d\}$,
  and four voters~$i, j, k, \ell$
  such that 
    \begin{alignat*}{3}
      \defvoter~i\colon~& a \pref_i b & ~\text{ and }~ & c \pref_i d \text{,}\\
      \defvoter~j\colon~& b \pref_j a  & ~\text{ and }~ & c \pref_j d \text{,}\\
      \defvoter~k\colon~& a \pref_k b  & ~\text{ and }~ & d \pref_k c \text{,}\\
      \defvoter~\ell\colon~& b \pref_{\ell} a  & ~\text{ and }~ & d \pref_{\ell} c \text{.}
    \end{alignat*}
\end{proposition}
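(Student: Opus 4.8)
The plan is to prove both implications separately: that a single-crossing profile contains neither subprofile (necessity), and that a profile avoiding both subprofiles is single-crossing (sufficiency). I expect necessity to be routine and sufficiency to carry all the difficulty.

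For necessity I would argue contrapositively using the interval characterisation of single-crossing stated just above (its third item): in a single-crossing order $\triangleright$, for every alternative pair the voters preferring one alternative form an interval and those preferring the other form an interval, so every pair splits the voters into a prefix and a suffix of $\triangleright$. In a $\delta$-subprofile the pair $\{a,b\}$ separates $\{i,k\}$ from $\{j,\ell\}$ while the pair $\{c,d\}$ separates $\{i,j\}$ from $\{k,\ell\}$; but a four-element order has a unique balanced prefix/suffix split, and these two crossing bipartitions cannot both equal it, a contradiction. In a $\gamma$-subprofile the pair $\{a,b\}$ forces $j$ to an end of $\triangleright$, the pair $\{c,d\}$ forces $i$ to an end, and the pair $\{e,f\}$ forces $k$ to an end; since three distinct voters cannot all be endpoints of one order, we again reach a contradiction.

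For sufficiency I would first recast single-crossing through $\diffpairs$. For any three voters the orientation of each pair is a $\mathbb{Z}_2$-quantity, so $\diffpairs(\succ_x,\succ_z)$ is the symmetric difference of $\diffpairs(\succ_x,\succ_y)$ and $\diffpairs(\succ_y,\succ_z)$; consequently the clause ``if $\succ_x$ and $\succ_z$ agree on a pair then $\succ_y$ agrees too'' is equivalent to $\diffpairs(\succ_x,\succ_y)\cap\diffpairs(\succ_y,\succ_z)=\emptyset$. Viewing each voter as the $0/1$ vector of its orderings of the pairs, single-crossing asks exactly that the voters lie, totally ordered, on one shortest monotone path of the Hamming cube. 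The construction lemma I would isolate is: if some voter $r$ has the property that the family $\{\diffpairs(\succ_r,\succ_w):w\neq r\}$ is a chain under inclusion, then ordering all voters by this inclusion (with $r$ first) is single-crossing, because for nested sets the successive symmetric differences are disjoint and hence every triple satisfies the clause above. The converse of this lemma is essentially the second item of the earlier proposition, so sufficiency reduces entirely to producing one such reference voter~$r$ from the hypothesis.

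The hard part will be this existence step, and it is where both forbidden patterns are genuinely used. I would take $r$ extremal, e.g.\ a voter at maximum $\diffpairs$-distance from an arbitrary fixed voter, and try to show its family cannot contain an incomparable pair. An incomparability yields pairs $P,Q$ and three voters realising three of the four corners on $(P,Q)$, which I would complete to a true $\delta$-subprofile; dually, a triple in which no voter is ``between'' the other two produces three pairs with the triangle disagreement pattern underlying a $\gamma$-subprofile. The delicate technical point is that the pairs arising here need not use disjoint alternatives, so they do not automatically match the six- and four-alternative templates of $\gamma$ and $\delta$; reconciling overlapping pairs — showing an overlap still forces one of the patterns or is itself impossible — is the real heart of the argument. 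I would close by recording that both conditions are indispensable: the four corners of a two-coordinate square form a configuration in which every triple has a between-voter yet the whole set is not linearly orderable, so $\gamma$-freeness alone is insufficient and $\delta$-freeness is precisely what excludes this two-dimensional branching.
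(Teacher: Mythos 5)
The paper itself never proves this proposition --- it is imported verbatim from \citet{BreCheWoe2013a} --- so your attempt can only be judged against the known argument, which it partially reconstructs. Your necessity direction is correct: restricting the prefix/suffix structure of each pair's supporters to the three (resp.\ four) voters involved rules out both patterns. Your sufficiency machinery is also sound: the symmetric-difference identity, the reformulation of the triple condition as $\diffpairs(\pref_x,\pref_y)\cap\diffpairs(\pref_y,\pref_z)=\emptyset$, and the lemma that any voter~$r$ whose sets $\diffpairs(\pref_r,\pref_w)$ form an inclusion chain induces a single-crossing order. However, the issue you call the ``real heart'' --- reconciling overlapping pairs --- is a non-issue: in the $\gamma$- and $\delta$-templates only the \emph{pairs} must be distinct, not the letters $a,\ldots,f$, and the proposition must be read this way for it to be true at all. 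Otherwise the three-voter Condorcet cycle on three alternatives (voter $1\colon a\pref b\pref c$; voter $2\colon b\pref c\pref a$; voter $3\colon c\pref a\pref b$) would be a non-single-crossing profile containing neither pattern, since it has too few alternatives for a six-letter $\gamma$ and too few voters for a $\delta$.

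The genuine gap is the step you lean on to finish. With $r$ chosen at maximum $\diffpairs$-distance from one \emph{fixed} voter~$u$, an incomparability cannot in general be ``completed to a true $\delta$-subprofile'': it gives three corners $r=(0,0)$, $w=(1,0)$, $w'=(0,1)$ on the witnessing pairs $P,Q$, but no voter need occupy the fourth corner $(1,1)$ --- among the voters in play only $u$ could, and only in the case where $u$ disagrees with $r$ on both $P$ and $Q$. When $u$ agrees with $r$ on both pairs, or on exactly one, the contradiction is not a $\delta$ at all; it has to be extracted from $\gamma$-freeness combined with maximality (for instance, a triangle-inequality count forcing $u$ and $r$ to have identical orders), and none of that is in your sketch. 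The standard repair is to maximize over \emph{pairs} of voters: choose $x,y$ with $|\diffpairs(\pref_x,\pref_y)|$ maximum among all pairs. Then (i) every voter~$w$ is between $x$ and $y$, since otherwise the triple $\{x,w,y\}$ has no median and is literally a $\gamma$-subprofile --- the witnessing pair for $w$ is given, and the witnessing pairs for $x$ and for $y$ are forced to exist by maximality, because $x$ or $y$ lying between the other two would produce a pair of voters at distance exceeding $|\diffpairs(\pref_x,\pref_y)|$; and (ii) any incomparability among the sets $\diffpairs(\pref_x,\pref_w)$ now genuinely completes to a $\delta$-subprofile, because betweenness places both witnessing pairs inside $\diffpairs(\pref_x,\pref_y)$, so $y$ itself realizes the corner $(1,1)$. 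With that change of extremal object, your chain lemma closes the proof exactly as you intend.
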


\subsection{Fundamental observations}
As mentioned in the introduction,
\citet[Lemma 4]{ST06} showed that for narcissistic profiles, single-crossingnes implies single-peakedness (see \citet[Proposition 1]{BreCheFinNie2017} for another proof, which uses our terminology).

\begin{proposition}[\cite{ST06,BreCheFinNie2017}]\label{nsc->sp}
Each narcissistic profile that is single-crossing with respect to some linear order~$\rhd$
is also single-peaked with respect to the same order~$\rhd$.
\end{proposition}

However, not all single-peaked narcissistic profiles are single-crossing, as the following example shows.

\begin{example}\label{ex:not-sc-profile}
  The profile given in \cref{ex:profile} is narcissistic, single-peaked, and single-crossing with $1\triangleright 2 \triangleright 3 \triangleright 4$ 
  being the desired order for the single-peaked property and the single-crossing property.
  See \cref{fig:restricted-profiles} for an visualization of both properties.
\begin{figure}[t]\captionsetup[subfigure]{position=t}
  \begin{subfigure}[b]{0.49\textwidth}
   \centering
    \begin{tikzpicture}[scale=0.9]
      \begin{axis}[
          symbolic x coords={1,2,3,4},
          xlabel shift=-1ex,
          xlabel=alternatives (voters),
          y dir=reverse,
          ylabel shift=-.5ex,
          ylabel=positions in preferences,
          xtick=data,
          ytick=data,
          x = 1.2cm,
          y = .9cm,
          legend style={draw=none,nodes={scale=1.2, transform shape},at={(0,1.7)},anchor=north west}
          ]
        \addplot[color=yellow,mark=*] coordinates {
          (1,  1)
          (2,  2)
          (3,  3)
          (4,  4)
        };

        \addplot[color=red,mark=triangle*] coordinates {
          (1,  4)
          (2,  1)
          (3,  2)
          (4,  3)
        };

        \addplot[color=blue,mark=square*] coordinates {
          (1,  4)
          (2,  2)
          (3,  1)
          (4,  3)
        };

        \addplot[color=green,mark=diamond*] coordinates {
          (1,  4)
          (2,  3)
          (3,  2)
          (4,  1)
        };
        \legend{\clR{1: $1 \succ 2 \succ 3 \succ 4$},\clB{2: $2 \succ 3 \succ 4 \succ 1$},\clG{3: $3 \succ 2 \succ 4 \succ 1$},\clO{4: $4 \succ 3 \succ 2 \succ 1$}}
      \end{axis}
    \end{tikzpicture}\end{subfigure}
  ~~
  \begin{subfigure}[b]{0.46\textwidth}
    \begin{tikzpicture}
  \tikzset{linemark/.style =   {line width= 5pt, color=#1}}

  \matrix (scprofilet) [matrix of math nodes, row sep=15pt, column sep=2pt] {
    1 \colon & 1 & \succ & 2  & \succ & 3 & \succ & 4\\
    2 \colon & 2 & \succ & 3 & \succ & 4 & \succ & 1\\
    3 \colon & 3 & \succ & 2 & \succ & 4 & \succ & 1\\
    4 \colon & 4 & \succ & 3 & \succ & 2 & \succ & 1\\
  };

  \begin{scope}[on background layer]
  \draw[linemark=yellow] 
  (scprofilet-1-2.north) -- (scprofilet-1-2.south) -- 
  (scprofilet-2-8.north) -- (scprofilet-2-8.south) -- 
  (scprofilet-3-8.north) -- (scprofilet-3-8.south) --
  (scprofilet-4-8.north) -- (scprofilet-4-8.south);

  \draw[linemark=lightrose] 
  (scprofilet-1-4.north) -- (scprofilet-1-4.south) -- 
  (scprofilet-2-2.north) -- (scprofilet-2-2.south) -- 
  (scprofilet-3-4.north) -- (scprofilet-3-4.south) --
  (scprofilet-4-6.north) -- (scprofilet-4-6.south);

  \draw[linemark=lightblue] 
  (scprofilet-1-6.north) -- (scprofilet-1-6.south) -- 
  (scprofilet-2-4.north) -- (scprofilet-2-4.south) -- 
  (scprofilet-3-2.north) -- (scprofilet-3-2.south) --
  (scprofilet-4-4.north) -- (scprofilet-4-4.south);

  \draw[linemark=lightgreen]
  (scprofilet-1-8.north) -- (scprofilet-1-8.south) -- 
  (scprofilet-2-6.north) -- (scprofilet-2-6.south) -- 
  (scprofilet-3-6.north) -- (scprofilet-3-6.south) --
  (scprofilet-4-2.north) -- (scprofilet-4-2.south);
  \end{scope} 
\end{tikzpicture}  \end{subfigure}
  \caption{Visualization of the single-peaked property (left) and the single-crossing property (right).
    Left: 
    The left-to-right order of the alternatives on the x-axis is a single-peaked order.
    The y-axis denotes the positions in a preference order.
    For each voter's preference order, we draw a colored line through the positions of all alternatives.
    In this way, we obtain a curve such that going from left to right on the x-axis, 
    the value (the position in the respective preference order) increases until it reaches its peak and then decreases.
    Right: 
    For each alternative, we draw a colored line, 
    which passes through the same alternative in each voter's preference order.
    It is easy to verify that each two colored lines cross at most once.
    This implies that the corresponding top-down order of the voters is a single-crossing order.}\label{fig:restricted-profiles}
\end{figure}
  However, if we change the preference order of voter~$3$ in \cref{ex:profile} to obtain the following
\begin{alignat*}{5}
    \defvoter~1\colon~ & 1 & ~\succ_1~ & 2 & ~\succ_1~ & 3 & ~\succ_1~ & 4,\\
    \defvoter~2\colon~ & 2 & ~\succ_2~ & 3 & ~\succ_2~ & 4 & ~\succ_2~ & 1,\\
    \defvoter~3\colon~ & 3 & ~\succ_3~ & 2 & ~\succ_3~ & 1 & ~\succ_3~ & 4,\\
    \defvoter~4\colon~ & 4 & ~\succ_4~ & 3 & ~\succ_4~ & 2 & ~\succ_4~ & 1,
\end{alignat*}
then the resulting profile is still narcissistic and single-peaked but not single-crossing anymore.
The reason is that the new profile contains subprofiles that are not single-crossing.
For instance, it contains a $\delta$-subprofile with respect to the pairs~$\{1,4\}$ and $\{2,3\}$, 
and the voters~$1,2,3,4$.
\end{example}

The above example and \cref{nsc->sp} show that 
single-crossing narcissistic profiles form a strict subset of single-peaked narcissistic profiles.

\subsection{Semi-standard Young tableaux}\label{subsec:SSYT}
For a positive integer~$n$, a \emph{semi-standard Young tableau (SSYT)} of order~$n$~\cite{Stanley1999}
consists of $n$ rows of positive integers that satisfy the following.
\begin{enumerate}[i)]
  \item For each~$i\in \{1,2,\dots,n\}$, 
  the $i^{\text{th}}$~row contains $n-i+1$ entries with integers between $1$ and $n$.
  \item When aligned in the upper-left corner (to obtain an isosceles right triangle),
  the entries weakly increase along each row and strictly increase down each column.
\end{enumerate}

\cref{ex:SSYT-3} illustrates how an SSYT looks like and shows all eight possible SSYTs of order three.

  \begin{example}    \label{ex:SSYT-3}
     \setlength{\tabcolsep}{2.5pt}
     \setlength{\extrarowheight}{1pt}
    There are eight different SSYTs of order~$3$:
     \begin{multicols}{8}
     \begin{tabular}{@{}|c|c|c|@{}}
       \hline
      $1$ & $1$ & $1$ \\
       \hline
      $2$ & $2$\\
       \cline{1-2}
       $3$\\
       \cline{1-1}
     \end{tabular}
     
     \smallskip

    \begin{tabular}{|l|l|l|}
       \hline
      $1$ & $1$ & $1$ \\
       \hline
      $2$ & $3$\\
       \cline{1-2}
       $3$\\
       \cline{1-1}
     \end{tabular}

     \begin{tabular}{|l|l|l|}
       \hline
      $1$ & $1$ & $2$ \\
       \hline
       $2$ & $2$\\
       \cline{1-2}
       $3$\\
       \cline{1-1}
     \end{tabular}
     
     \begin{tabular}{|l|l|l|}
       \hline
      $1$ & $1$ & $2$ \\
       \hline
      $2$ & $3$\\
       \cline{1-2}
       $3$\\
       \cline{1-1}
     \end{tabular}

     \begin{tabular}{|l|l|l|}
       \hline
      $1$ & $1$ & $3$ \\
       \hline
      $2$ & $2$\\
       \cline{1-2}
       $3$\\
       \cline{1-1}
     \end{tabular}

     \begin{tabular}{|l|l|l|}
       \hline
      $1$ & $1$ & $3$ \\
       \hline
      $2$ & $3$\\
       \cline{1-2}
       $3$\\
       \cline{1-1}
     \end{tabular}
        
     \begin{tabular}{|l|l|l|}
       \hline
      $1$ & $2$ & $2$ \\
       \hline
      $2$ & $3$\\
       \cline{1-2}
       $3$\\
       \cline{1-1}
     \end{tabular}
        
     \begin{tabular}{|l|l|l|}
       \hline
      $1$ & $2$ & $3$ \\
       \hline
      $2$ & $3$\\
       \cline{1-2}
       $3$\\
       \cline{1-1}
     \end{tabular}        
   \end{multicols}
 \end{example}

\paragraph{Remark} Young tableaux~\cite{Young1928} were originally defined on a Ferrers diagram which can be of an arbitrary staircase-like shape, that is, 
a Young tableau may contain $n$ rows of non-increasing lengths.
The numbers in a tableau can be from an arbitrary integer range.
For our purpose, it is sufficient to focus on SSYTs for isosceles right triangles and for integer range~$[1,n]$.
The second condition described above defines the ``semi-standard'' property.
We refer to the work of \citet{Stanley1999,Fulton1997,Yong2007} for further reading.

By \citet[Corollary 7.21.4]{Stanley1999}'s hook content lemma, 
we can deduce that the number of SSYTs of order~$n$, denoted as $\#_{\SSYT}(n)$, equals $2^{\binom{n}{2}}$.
Before we show this result, we need two more notions.
\begin{definition}[Hook lengths and hook contents]
  Let $n$ be an arbitrary positive integer. 
  For each two values~$i,j$ 
  with $1\le i \le n$ and $1\le j \le n+1-i$, we define the following two notions.
  \begin{enumerate}
    \item The \emph{hook length} of $(i,j)$ for the order~$n$, denoted as $h_n(i,j)$, is one plus the number of entries directly below or to the right of $T(i,j)$ in the $i^{th}$ row and $j^{th}$ column,
where $T$ is an arbitrary SSYT of order~$n$: $h_n(i,j)\coloneqq 2\cdot (n - i - j) + 3$.
  \item The \emph{hook content} of $(i,j)$ for the order~$n$ is defined as $c_n(i,j)\coloneqq n-i+j$.
  \end{enumerate}
\end{definition}

Note that the hook length and the hook content do not depend on the values of any SSYT but on its order.
Moreover, the original notion of \emph{(hook) content for $(i,j)$} as introduced by \citet[\S~7.21]{Stanley1999} equals $j-i$ instead of $n-i+j$. 
We use our definition of the hook content to simplify some of our equations below and the equation given in the hook content lemma~by \citet[Corollary~7.21.4]{Stanley1999}.

The following example shows the hook length and the hook content of all SSYTs shown in \cref{ex:SSYT-3}.

\begin{example}
   The hook lengths and hook contents of SSYTs of order $3$ are:

   \medskip

     \begin{tabular}{l|l|l|l|l}
      \cline{2-4}
      &$5$ & $3$ & $1$ \\
       \cline{2-4}
      Hook lengths &$3$ & $1$\\
       \cline{2-3}
      & $1$\\
       \cline{2-2}
     \end{tabular}
     ~~
      \begin{tabular}{l|l|l|l|l}
      \cline{2-4}
      &$3$ & $4$ & $5$ \\
       \cline{2-4}
      Hook contents &$2$ & $3$\\
       \cline{2-3}
      & $1$\\
       \cline{2-2}
     \end{tabular}

\end{example}

Now, we are ready to derive the number~$\#_{\SSYT}(n)$ SSYTs of order~$n$. 

\begin{theorem}\label{thm:ssyt}
  For each positive integer~$n$, the number~$\#_{\SSYT}(n)$ of semi-standard Young tableaux of order~$n$ equals $2^{\binom{n}{2}}$.
\end{theorem}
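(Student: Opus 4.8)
The plan is to apply Stanley's hook content formula, which expresses the number of SSYTs of a given shape with entries in $\{1,\ldots,n\}$ as the product, over all cells of the shape, of the hook content divided by the hook length. Since the paper's hook content $c_n(i,j)=n-i+j$ already incorporates the entry bound $n$ (it equals $n+(j-i)$, i.e.\ $n$ plus the usual cell content $j-i$), the formula applied to the staircase shape gives directly
\[
\#_{\SSYT}(n)=\prod_{i=1}^{n}\prod_{j=1}^{n+1-i}\frac{c_n(i,j)}{h_n(i,j)}
=\prod_{i=1}^{n}\prod_{j=1}^{n+1-i}\frac{n-i+j}{2(n-i-j)+3}.
\]
The remaining task is purely to evaluate this ratio of products and confirm it equals $2^{\binom{n}{2}}$.

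The key idea is to group the double product by rows and to notice that each row contributes a clean ratio of factorials. Writing $m\coloneqq n-i$, so that $j$ ranges over $1,\ldots,m+1$ in row~$i$, I would observe that the hook contents $c_n(i,j)=m+j$ run through the \emph{consecutive integers} $m+1,m+2,\ldots,2m+1$, whereas the hook lengths $h_n(i,j)=2(m-j)+3$ run through the \emph{consecutive odd integers} $1,3,\ldots,2m+1$. Hence the product of contents in row~$i$ is $(2m+1)!/m!$, while the product of hook lengths in row~$i$ is $1\cdot 3\cdots(2m+1)=(2m+1)!/(2^{m}\,m!)$.

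Taking the ratio row by row, the factor $(2m+1)!$ cancels and the contribution of row~$i$ collapses to exactly $2^{m}$. Multiplying over all rows, that is over $m=0,1,\ldots,n-1$, then yields
\[
\#_{\SSYT}(n)=\prod_{m=0}^{n-1}2^{m}=2^{\sum_{m=0}^{n-1}m}=2^{\binom{n}{2}},
\]
as claimed. I expect the only genuine obstacle to be the bookkeeping that correctly identifies the per-row value sets (consecutive integers for the contents, consecutive odd numbers for the hook lengths) and the ranges of $i$ and $j$; once these are pinned down, the factorials telescope and the exponent $\sum_{m=0}^{n-1}m=\binom{n}{2}$ drops out immediately. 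The order-$3$ data in \cref{ex:SSYT-3}, where the product of hook contents is $360$ and that of hook lengths is $45$ with ratio $8=2^{\binom{3}{2}}$, provides a convenient sanity check of the whole computation.
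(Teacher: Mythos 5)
Your proof is correct, and it rests on the same key external input as the paper's---Stanley's hook content formula---but it evaluates the resulting product by a genuinely different route. The paper proceeds by a double induction: an outer induction establishing the recurrence $\#_{\SSYT}(n+1)=2^{n}\cdot\#_{\SSYT}(n)$ by splitting the order-$(n+1)$ product into the first row and the remaining rows (the latter collapsing to the order-$n$ product via the index shifts $h_{n+1}(i,j)=h_{n}(i-1,j)$ and $c_{n+1}(i,j)=c_{n}(i-1,j)$), together with an inner induction showing that the first-row factor alone equals $2^{n}$. You instead compute the whole product in closed form, row by row: setting $m=n-i$, the contents of row~$i$ are the consecutive integers $m+1,\ldots,2m+1$ with product $(2m+1)!/m!$, while the hook lengths are the odd numbers $1,3,\ldots,2m+1$ with product $(2m+1)!/(2^{m}\,m!)$, so each row contributes exactly $2^{m}$ and the exponent $\sum_{m=0}^{n-1}m=\binom{n}{2}$ drops out at once; your check against the order-$3$ data ($360/45=8$) is consistent. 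Your evaluation is shorter, avoids the index-shifting bookkeeping and the nested induction entirely, and makes the origin of the exponent $\binom{n}{2}$ transparent; what the paper's version buys in exchange is an explicit statement of the recurrence $\#_{\SSYT}(n+1)=2^{n}\,\#_{\SSYT}(n)$, i.e., the multiplicative cost of enlarging the order by one, which is a structural fact in its own right. Both arguments are sound; yours is arguably the cleaner way to finish once the hook content formula is in hand.
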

\begin{proof}
  \citet[Corollary 7.21.4]{Stanley1999}'s hook content formula states that
  the number~$\#_\SSYT(n)$ of SSYTs of order~$n$ equals
  \begin{align}\label{eq:hookcontentformula}
    \prod_{\substack{1\le i \le n\\ 1\le j \le n+1-i}}\frac{c_n(i,j)}{h_n(i,j)}.
  \end{align}
  Note that the product~\eqref{eq:hookcontentformula} is essentially the same as the one given in  \citet[Corollary 7.21.4]{Stanley1999} since we defined the hook content for $(i,j)$ to be $n-i+j$ instead of $j-i$.
  
We need to show that the product~\eqref{eq:hookcontentformula} equals $2^{\binom{n}{2}}$.
To accomplish this, we show the following:
  \begin{enumerate}[i)]
    \item $\#_{\SSYT}(1) = 1$ and 
    \item for each $n\ge 2$ it holds that $\#_{\SSYT}(n+1) = 2^{n}\cdot  \#_{\SSYT}(n)$.
  \end{enumerate}
  For the first equation, 
  we can easily check that there is only one SSYT of order one, 
  implying that $\#_{\SSYT}(1) = 1 = 2^{\binom{1}{2}}$ (note that the definition of binomial coefficients implies that $\binom{1}{2}=0$).
  
  Now, consider SSYTs of order~$n+1$.
  The definitions of hook lengths and hook contents imply the following equations:
\allowdisplaybreaks
  \begin{align}
  \nonumber \intertext{$\forall i\in \{2,3,\ldots, n+1\}, \forall j \in\{1,2,\ldots,n+2-i\} \colon$}
   \label{length}    h_{n+1}(i,j) & =~ 2\cdot (n+1 - i - j ) + 3 =  h_n(i-1,j)\text{,}\\
   \label{content}    c_{n+1}(i,j) & =~ n+1 - i +j =  c_n(i-1,j)\text{.}\\[-4ex]
   \nonumber   \intertext{$\forall j \in\{2,\ldots,n+1\} \colon$}
   \label{length2}    h_{n+1}(1,j) & =~ 2\cdot (n+1 - 1 - j ) + 3 =  h_n(1,j-1)\text{,}\\[-4ex]
   \nonumber   \intertext{$\forall j \in\{1,\ldots,n-1\} \colon$}
   \label{content2}    c_{n+1}(1,j) & =~ n+1 - 1 +j =  c_n(1,j+1)\text{.}
  \end{align}


  By the hook content formula,
  we can derive the number~$\#_{\SSYT}(n+1)$ from the hook lengths and hook contents of SSYTs of order~$n$:
  \begin{align}
    \nonumber   \#_{\SSYT}(n+1) & ~=  
    \prod_{\substack{1\le i \le n+1\\ 1\le j \le n+2-i}}\frac{c_{n+1}(i,j)}{h_{n+1}(i,j)}\\
    &~= \prod_{1\le j \le n+1}\frac{c_{n+1}(1,j)}{h_{n+1}(1,j)}\cdot \prod_{\substack{2\le i \le n+1\\ 1\le j \le n+2-i}}\frac{c_{n+1}(i,j)}{h_{n+1}(i,j)}\text{.}\label{ssyt-n+1}
 \end{align}

If we can show that the second factor and the first factor of the product on the right-hand side of equation~\eqref{ssyt-n+1} equal  $\#_{\SSYT}(n)$ and $2^{n}$, respectively, 
then by \eqref{ssyt-n+1}, we can derive that  $\#_{\SSYT}(n+1)=2^{n}\cdot \#_{\SSYT}(n)$.
Thus, it remains to show the following:
\begin{align}
\label{second-part}\prod_{\substack{2\le i \le n+1\\ 1\le j \le n+2-i}}\frac{c_{n+1}(i,j)}{h_{n+1}(i,j)} &= \#_{\SSYT}(n)\text{, and }\\
\label{first-part} \prod_{1\le j \le n+1}\frac{c_{n+1}(1,j)}{h_{n+1}(1,j)}&= 2^n \text{.}
\end{align}

\noindent To show the correctness of equation~\eqref{second-part}, we use equations~\eqref{length} and \eqref{content}:
\begin{align*}
  \prod_{\substack{2\le i \le n+1\\ 1\le j \le n+2-i}}\frac{c_{n+1}(i,j)}{h_{n+1}(i,j)}
  &~= \prod_{\substack{2\le i \le n+1\\ 1\le j \le n+2-i}}\frac{c_{n}(i-1,j)}{h_{n}(i-1,j)}\\
  &~= \prod_{\substack{1\le i' \le n\\ 1\le j \le n+1-i'}}\frac{c_{n}(i',j)}{h_{n}(i',j)} = \#_{\SSYT}(n)\text{.}
\end{align*}
The last equality holds by the hook content formula, as mentioned in the beginning of the proof. 

We show equation~\eqref{first-part} by induction on $n$.

First of all,
for $n=1$, we have that $\prod_{1\le j \le n+1}\frac{c_{n+1}(1,j)}{h_{n+1}(1,j)} = 
\frac{c_2(1,1)}{h_2(1,1)}\cdot \frac{c_2(1,2)}{h_2(1,2)} = \frac{2\cdot 3}{3 \cdot 1} = 2 = 2^1$.
Now, suppose that equation~\eqref{first-part} holds for~$n\coloneqq \ell-1$,
implying that $\prod_{1\le j \le \ell}\frac{c_{\ell}(1,j)}{h_{\ell}(1,j)}=~2^{\ell-1} \text{.}$
We show that the equality also holds for $n \coloneqq \ell$:
\begin{align*}
 \prod_{1\le j \le n+1}\frac{c_{n+1}(1,j)}{h_{n+1}(1,j)}
  &~~=~ 
     \prod_{1\le j \le \ell+1}\frac{c_{\ell+1}(1,j)}{h_{\ell+1}(1,j)}\\
  &~~=~  \frac{c_{\ell+1}(1,\ell+1)  \cdot c_{\ell+1}(1,\ell) }{h_{\ell+1}(1,1)} \cdot 
  \frac{\prod_{1\le j \le \ell-1}c_{\ell+1}(1, j)}{\prod_{2\le j \le \ell+1}h_{\ell+1}(1,j)}\\
  &~\overset{\text{def.}}{=}~  \frac{2\cdot \ell+1}{2\cdot \ell+1}\cdot
  c_{\ell+1}(1,\ell)
  \cdot
   \frac{\prod_{1\le j \le \ell-1}c_{\ell+1}(1, j)}{\prod_{2\le j \le \ell+1}h_{\ell+1}(1,j)}\\  
  &\overset{\eqref{length2}\eqref{content2}}{=} 
  c_{\ell+1}(1,\ell)
  \cdot \frac{\prod_{1\le j \le \ell-1}c_{\ell}(1, j+1)}{\prod_{2\le j \le \ell+1}h_{\ell}(1,j-1)}\\
   &~~=~  \frac{c_{\ell+1}(1,\ell)}{c_{\ell}(1,1)} \cdot 
   \frac{\prod_{0\le j \le \ell-1}c_{\ell}(1, j+1)}{\prod_{2\le j \le \ell+1}h_{\ell}(1,j-1)}\\
  &~\overset{\text{def.}}{=}~ 
  \frac{\ell+1 - 1 + \ell}{\ell-1+1} \cdot \frac{\prod_{1\le j' \le \ell}c_{\ell}(1, j')}{\prod_{1\le j' \le \ell}h_{\ell}(1,j')}\\
  &~~=~   2^{\ell} = 2^{n}\text{.}  
\end{align*}
\noindent The last equality holds by our induction assumption.
\end{proof}

\paragraph{Remarks} 
The proof of \cref{thm:ssyt} is rather crude and lengthy, but straightforward. 
Nevertheless, there  is another, shorter and more elegant proof, pointed out by one of the reviewers of the journal Discrete Mathematics, for deriving the desired number~$\#_{\SSYT}(n)$ by applying the Schur function on the integer partition~$(n,n-1,\dots,1)$~\cite[\S~7.15.1]{Stanley1999} and using the Vandermonde determinant identity.


\section{Counting Single-Peaked Narcissistic Profiles}\label{sec:nsp}
In this section, we study the number of single-peaked narcissistic (SPN) preference profiles for the voter set~$\vvv=\{1,2,\ldots, n\}$.
Recall that a voter is narcissistic if she ranks herself at the first position.
Thus,
\begin{align}
  \label{vi}
  \text{for each } i, \defvoter~i \text{ has preference order of the form } i \pref_i \ldots\text{.}
\end{align}

In the following, for SPN preference profiles with at least two voters,
there are always two voters whose preference orders are the reverse of each other.

\begin{lemma}\label{reverse-exists}
  For each single-peaked narcissistic profile~$\ppp=(\pref_1,\pref_2,\ldots, \pref_n)$,
  there are  two voters~$i,j$ such that $|\diffpairs(\pref_i, \pref_j)|=\binom{n}{2}$.
\end{lemma}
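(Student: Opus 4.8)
The plan is to locate the two required voters at the extremes of the single-peaked axis. Fix a linear order $\triangleright$ witnessing single-peakedness and write it as $x_1 \triangleright x_2 \triangleright \cdots \triangleright x_n$. The key structural fact I would establish is that any single-peaked voter whose peak lies at an endpoint of $\triangleright$ is forced to rank all alternatives monotonically along $\triangleright$: the voter peaked at $x_1$ ranks them exactly as $\triangleright$, and the voter peaked at $x_n$ ranks them in the reverse of $\triangleright$. These two orders disagree on every pair, which is precisely the conclusion sought.

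First I would invoke narcissism to guarantee that both endpoints are actually occupied. By definition every alternative is the peak of exactly one voter, so in particular $x_1$ is the peak of voter $x_1$ and $x_n$ is the peak of voter $x_n$; both of the relevant voters therefore exist. This is the only place narcissism enters, and it is essential: single-peakedness by itself permits profiles in which no voter peaks at an endpoint.

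Next I would verify the monotonicity claim straight from the single-peaked definition. For the voter $i$ with $\peak(\pref_i)=x_1$ the clause ``$a\triangleright b\triangleright \peak(\pref_i)$'' is vacuous, while for $1<j<k$ we have $x_1\triangleright x_j\triangleright x_k$, so the clause ``$\peak(\pref_i)\triangleright b\triangleright a$'' forces $x_j\pref_i x_k$ (the case $j=1$ being immediate since $x_1$ is the peak); hence $\pref_i$ is $x_1\pref_i x_2\pref_i\cdots\pref_i x_n$. A symmetric argument using the other clause shows the voter $j$ with $\peak(\pref_j)=x_n$ has $\pref_j$ equal to $x_n\pref_j\cdots\pref_j x_1$. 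Since $\pref_i$ and $\pref_j$ are exact reverses of one another, all $\binom{n}{2}$ unordered pairs of alternatives are ordered oppositely, giving $|\diffpairs(\pref_i,\pref_j)|=\binom{n}{2}$.

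I do not expect a serious obstacle here; the only points demanding care are the vacuous-clause bookkeeping at the two endpoints and keeping clear that it is narcissism, not single-peakedness, that supplies the endpoint-peaked voters. (One could alternatively derive the monotonicity from \cref{prop:alternative-sp}, noting that for an endpoint peak every set $\topp(\pref_i,\{j\})$ is an initial or final segment of $\triangleright$, which may read more cleanly.)
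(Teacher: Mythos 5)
Your proof is correct, and it takes a genuinely different route from the paper's. The paper never fixes an axis: it works with the forbidden-subprofile characterization of \citet{BaHa2011} (\cref{prop:sp-characterization}). There, the worst-subprofile restriction together with narcissism forces exactly two alternatives~$a,b$ to appear in last position at all, necessarily with voter~$a$ ranking $b$ last and voter~$b$ ranking $a$ last; then, if some pair $\{c,d\}$ were ordered the same way by $\pref_a$ and $\pref_b$, the alternatives $a,b,c,d$ and voters $a,b$ would form a forbidden $\alpha$-subprofile, so $\pref_a$ and $\pref_b$ must be mutual reverses. You instead fix a witnessing axis~$\triangleright$, use narcissism (which you correctly isolate as the essential ingredient) to obtain voters peaked at its two endpoints, and verify directly from the definition of single-peakedness that their orders are the axis order and its reverse; your clause-by-clause check of the two betweenness conditions is exactly right. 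Both arguments are sound. Yours is more elementary, needing only the definition rather than the characterization, and it yields strictly more information: it locates the two voters as the endpoint-peaked ones and completely determines their preference orders, which is precisely the content of the renaming steps~\eqref{v1+n} and~\eqref{sp-order} that the paper establishes separately after the lemma. What the paper's axis-free argument buys in exchange is an intrinsic identification of the reverse pair (the two voters who rank each other last) without committing to any particular single-peaked order, consistent with the forbidden-subprofile machinery it relies on elsewhere.
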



\begin{proof}
  Let $a_1 \rhd a_2 \rhd \dots \rhd a_n$ be a single-peaked order for the profile~$\ppp$.
  Then, by the narcissistic property (\ref{vi}), 
  $\peak(\pref_{a_1})=a_1$ and $\peak(\pref_{a_n})=a_n$.
  Following the single-peaked order $\rhd$,
  we obtain that the preference orders of $a_1$ and $a_n$ are $a_1 \pref_{a_1} a_2 \pref_{a_1}\dots \pref_{a_1} a_n$
  and $a_n \pref_{a_n} a_{n-1} \pref_{a_n} \dots \pref_{a_n} a_1$.
  This implies that $|\diffpairs(\pref_{a_1}, \pref_{a_n})|=\binom{n}{2}$.
\end{proof}

\begin{example}\label{ex:nsp-without-renaminig}
  Consider the following three voters~$1,2,3$ with preference orders:
  \begin{align*}
   \defvoter~1\colon 1 \pref_1 3 \pref_1 2\text{, }\;
    \defvoter~2\colon 2 \pref_2 1 \pref_2 3 \text{,\; and }
   \defvoter~3\colon 3 \pref_3 1 \pref_3 2\text{.}
    \end{align*}
  The profile~$(\pref_1,\pref_2,\pref_3)$ is narcissistic, and single-peaked with respect to the orders~$2\rhd 1 \rhd 3$ 
  and $3 \rhd 1 \rhd 2$.
  The preference orders of voters~$2$ and $3$ are reverse to each other.
\end{example}

By the proof of~\cref{reverse-exists}, we can rename the voters such that the preference orders of voter~$1$ and $n$ are the following.
\begin{align}
\label{v1+n}  \setlength{\tabcolsep}{2.5pt}
\begin{tabular}{lccccccc}
  \defvoter~$1\colon$ & $1$ & $\pref_1$& $2$ & $\pref_1$ & $\ldots$ & $\pref_1$ & $n$\text{,}\\
  \defvoter~$n\colon$ & $n$ & $\pref_n$ & $n-1$ & $\pref_n$ & $\ldots$ & $\pref_n$ & $1$\text{.}
\end{tabular}
\end{align}
It is easy to show that the only linear orders of alternatives with respect to which voters~$1$ and $n$ (and thus the whole profile) are single-peaked
must be the preference orders of either voter~$1$ or voter~$n$ (also see Lemma~5.1 of the work of \citet{ChePruWoe2017} for more details).

\begin{corollary}\label{lem:exactly-two-sp-orders}
  Each single-peaked narcissistic profile with at least two voters admits exactly two single-peaked orders.
\end{corollary}

Summarizing, by Statement~\eqref{v1+n} and by \cref{lem:exactly-two-sp-orders},  we can rename the voters such that
\begin{align}
   \text{each SPN profile has exactly two SP orders}\colon 1\triangleright 2 \triangleright \ldots \triangleright n \text{ and its reverse.}   \label{sp-order}
\end{align}

For SPN profiles that obey \eqref{sp-order}, we observe the following.

\begin{proposition}\label{prop:nsp-property}
  Let $\ppp(\vvv)$ be a single-peaked narcissistic preference profile with $n$~voters, 
  and let $a_1\triangleright a_2 \triangleright \dots \triangleright a_n$ be a single-peaked order for $\ppp(\vvv)$.
  Then, for each voter~$a_i\in \vvv$, the following holds:
  \begin{enumerate}
    \item the preference order of $a_i$ restricted to the set~$\{a_1,a_2,\dots,a_i\}$ is decreasing, that is, 
    $a_i\pref_{a_i}a_{i-1} \pref_{a_i} \dots \pref_{a_i} a_1$, and
    \item  the preference order of $i$ restricted to the set~$\{a_i,a_{i+1},\dots,a_n\}$ is increasing, that is, 
    $a_i\pref_{a_i} a_{i+1} \pref_{a_i} \dots \pref_{a_i} a_n$.
  \end{enumerate}
\end{proposition}

\begin{proof}
  Since $\ppp(\vvv)$ is narcissistic, 
  each alternative~$a_i$ is the peak~$\peak(\pref_{a_i})$ of her own preference order, 
  that is, 
  $\forall a_j\in \vvv\setminus \{a_i\}\colon a_i \pref_{a_i} a_j$.
  By the definition of single-peakedness,
  the two statements follow.
\end{proof}

\begin{example}  \label{ex:different-nsp}
  The profile given in  \cref{ex:nsp-without-renaminig} does not obey \eqref{v1+n},
  but it has exactly two single-peaked orders: $2\rhd 1 \rhd 3$ and its reverse.
  If we rename the voters in the profile given in \cref{ex:nsp-without-renaminig} to $1\mapsto 2$, $2\mapsto 1$ and $3\mapsto 3$,
  then we obtain the lower-right profile,
  which obeys  \eqref{v1+n}.
  In this case, we consider both profiles as the same SCN profile. 
\begin{multicols}{2}
  \noindent
  \begin{align*}
    \defvoter~1\colon 1\pref_1 2 \pref_1 3\text{,}\\
    \defvoter~2\colon 2\pref_2 1 \pref_2 3\text{,}\\
    \defvoter~3\colon 3\pref_3 2 \pref_3 1\text{.}
  \end{align*}

  \columnbreak

  \noindent
  \begin{align*}
    \defvoter~1\colon 1\pref_1 2 \pref_1 3\text{,}\\
    \defvoter~2\colon 2\pref_2 3 \pref_2 1\text{,}\\
    \defvoter~3\colon 3\pref_3 2 \pref_3 1\text{.}
  \end{align*}
\end{multicols}

Indeed, for three voters, we have two different SPN profiles (see above) obeying \eqref{v1+n}.
Both profiles are single-peaked with respect to the orders of voters~$1$ and $3$.
However, we consider both profiles as different although we can obtain the upper-right profile from the upper-left profile by renaming $1\mapsto 3$, $2\mapsto 2$ and $3\mapsto 1$.
\end{example}

We are interested in SPN profiles that obey \eqref{v1+n},
that is, SPN profiles that are single-peaked with respect to the order~$1\triangleright 2 \triangleright \dots \triangleright n$ and its reverse.
In the following, we show our main result for the number of SPN preference profiles.
\begin{theorem}\label{thm:nsp}
  The number of narcissistic profiles for $n$ voters ($n\ge 2$)
  that are single-peaked with respect to the order~$1\triangleright 2 \triangleright \dots \triangleright n$ 
  is $\prod_{2\le i \le n-1}\binom{n-1}{i-1}$.
\end{theorem}

\begin{proof}
  By \cref{prop:nsp-property}, 
  for each voter~$i$, $2\le i \le n-1$,
  her preference order must satisfy
  \begin{align*}
    \defvoter~i\colon i\pref_i i-1 \pref_i \cdots \pref_i 1 \text{ and } i \pref_i i+1 \pref_i \cdots \pref_i n\text{.}
  \end{align*}
  Thus, if it is clear which positions the alternatives $1,2,\ldots, i-1$ will occupy in the preference order of voter~$i$,
  then the positions of $i+1,i+2,\ldots, n$ are also clear.
  Then, the preference order of $i$ is also fixed.
  There are $\binom{n-1}{i-1}$ possible ways to give $i-1$ positions to alternatives~$1,2,\ldots, i-1$.
  Altogether, we obtain the desired result for the number of all different SPN profiles.
\end{proof}

\section{Counting Single-Crossing Narcissistic Profiles}\label{sec:nsc}
In this section, we study the number of single-crossing narcissistic (SCN) preference profiles for the voter set~$\vvv=\{1,2,\ldots, n\}$.
Just as in \cref{sec:nsp},
we are interested in the SCN profiles that are single-crossing with respect to the linear order~$1\triangleright 2 \triangleright \dots \triangleright n$.
Since SCN profiles are also SPN~(\cref{nsc->sp}), 
we obtain the following result.

\begin{proposition}\label{cor:prelim-sc}
  For each narcissistic profile with the voter set~$\vvv=\{1,2,\ldots, n\}$ that is single-crossing with respect to the order~$1\triangleright 2 \triangleright \cdots \triangleright n$,
  the following holds.
  \begin{enumerate}[i)]
    \item The profile is only single-peaked with respect to~$\triangleright$ and its reverse.
    \item For each $i\in \{1,2,\ldots, n\}$, 
    the preference orders of voter~$i$ restricted to~$\{1,2,$ $\dots, i\}$ and to $\{i,i+1,\dots, n\}$ are $i\pref_i i-1 \pref_i \dots \pref_i 1$ and $i\pref_i i+1 \pref_i \dots \pref_i n$, respectively.
    \item The positions of each alternative~$a$, $1\le a \le n-1$
    in the preference orders of the voters $a+1,a+2,\dots, n$ along the order $a+1\triangleright a+2 \triangleright \cdots \triangleright n$
    are non-decreasing.
  \end{enumerate}
\end{proposition}


\begin{proof}
  Since the reverse of $\triangleright$ is also a single-crossing order,
  by \cref{nsc->sp}, the first statement follows.
  The second statement follows from \cref{nsc->sp} and from the definition of single-peakedness.

  It remains to show the correctness of the last statement.   
 Towards a contradiction, suppose that the positions of an alternative~$a$ in the preference orders of voters~$a+1, a+2, \dots, n$ are \emph{not} non-decreasing.
  Then, there must be another alternative~$b$ and two voters~$i, j$,
  $a+1\le i < j \le n$, with preferences
  $b \pref_i a$ and $a\pref_j b$.
  But since voter~$a$ prefers $a\pref_a b$, we obtain that $\triangleright$ is not a single-crossing order because of the pair~$\{a,b\}$---a contradiction.
\end{proof}

The profile given in \cref{ex:not-sc-profile} is evidence that the number of SCN profiles is strictly less than the number of  SPN profiles.
But how many SCN profiles are there exactly?
To answer this question,
we first construct a function from SCN profiles with $n$ voters to SSYTs of order~$n-1$.
To this end, 
let $\mathbb{P}_n$ be the set of all SCN profiles with $n$ voters that 
are single-crossing with respect to the linear order~$1\triangleright 2 \triangleright \dots \triangleright n$,
and let $\mathbb{S}_{n-1}$ be the set of all SSYTs of order $n-1$.
\begin{definition}[A function from SCN profiles to SSYTs]\label{def:f}
  Define a function $f\colon \mathbb{P}_{n}\to \mathbb{S}_{n-1}$ 
  that maps each SCN profile~$\prefs\in \mathbb{P}_{n}$ for the voter set~$\vvv=\{1,2,\dots, n\}$ to an SSYT~$f(\prefs)=T$ of order~$n-1$ as follows.  

  For each alternative~$i$ except $n$ (that is,  $i\in \vvv\setminus \{n\}$),
  we construct the $i^{th}$~row of~$T$ with $n-i$ entries.
  Their values depend on the positions of alternative~$i$ in the preference orders of voters $n, n-1,\ldots, i+1$:
  \begin{align*}
    \forall j\in \{1,2,\ldots, n-i\}\colon T(i,j) \coloneqq n+1- \pos(\pref_{n+1-j}, i)\text{.} 
  \end{align*}
\end{definition}

Briefly put, the value of $T$ at the $i^{\text{th}}$ row and $j^{\text{th}}$ column equals 
the ``reverted'' position of alternative~$i$ in the preference order of voter~$n+1-j$.
The values of each column~$j$ are determined by the preference order of voter~$n+1-j$. 
\cref{tab:def-f} gives an illustration of how to build an SSYT~$T$ from a given preference profile.

\begin{table}
  \centering

\begin{tikzpicture}[every node/.style={text height=2ex, text depth=.5ex,anchor=base}, every row/.style={nodes={minimum height=.85cm, minimum width=1.6cm}}, column 2/.style={nodes={minimum width=2.9cm}}, column 3/.style={nodes={minimum width=2cm}}, column 4/.style={nodes={minimum width=2cm}}, column 6/.style={nodes={minimum width=2cm}}, column 5/.style={nodes={minimum width=2cm}}]
\matrix (m) [matrix of nodes,column sep=-\pgflinewidth, row sep=-\pgflinewidth]{
& |[draw,fill=gray!30]|$n+1-\pos(\cdot)$ & |[draw,fill=gray!30]|voter~$n$ & |[draw,fill=gray!30]|voter~$n-1$ & |[draw,fill=gray!30]|$\cdots$ & |[draw,fill=gray!30]|voter~$2$\\
& |[draw,fill=gray!30]|alternative~$1$&|[draw]| & |[draw]| & |[draw]| & |[draw]| \\
{\large $T\colon$}& |[draw,fill=gray!30]|alternative~$2$&|[draw]| & |[draw]| & |[draw]|  \\
& |[draw,fill=gray!30]|$\cdots$&|[draw]|$\cdots$ & |[draw]|$\cdots$   \\
& |[draw,fill=gray!30]|alternative~$n-1$&|[draw]|\\
};
\end{tikzpicture}
\caption{An illustration of constructing an SSYT~$T$ according to \cref{def:f}.}
\label{tab:def-f}
\end{table}

Note that we do not address the positions of alternative~$n$ (in any preference order)
since the positions of $1,2,\ldots, n-1$ determine the position of $n$.
Moreover,  the positions of all alternatives in the preference order of voter~$1$ are also fixed.
We use \cref{ex:profile} to illustrate our function given in \cref{def:f}.

\begin{example}   \label{ex:f}
    Let $\ppp$ denote the profile from \cref{ex:profile}. 
    By \cref{def:f}, the SSYT obtained for $\ppp$ is depicted in the figure below.
    
    {\centering
      \setlength{\extrarowheight}{1pt}
      
      \begin{tabular}{ l c | c | c |}
        \cline{2-4}
        &   \multicolumn{1}{|c|}{ $1$ }& $1$ & $1$ \\
        \cline{2-4}
        $T\colon$ & \multicolumn{1}{|c|}{ $2$ } & $3$\\
        \cline{2-3}
        & \multicolumn{1}{|c|}{  $3$ }\\
      \cline{2-2}
      \end{tabular}
      \par
    }

    \smallskip
    
    \noindent The positions of alternative~$2$ in the preference orders of voters~$3$ and $4$ are
    $2$ and $3$, respectively.
    Thus, the second row of $T=f(\ppp)$ has two entries: $T(2,1)=4+1-3=2$ and $T(2,2)=4+1-2=3$.
  
  Note that the preference order of the last voter is always fixed to   
  $n\pref_n n-1\pref_n \cdots \pref_n 1\text{.}$
  Indeed, the values of the first column in every SSYT are also fixed, namely $(1,2,\ldots, n-1)^{T}$.
  For order 3, there are eight such SCN profiles.
  Our function~$f$ will map each of these profiles to a unique SSYT of order~$3$ given in \cref{ex:SSYT-3}.
\end{example}

In the following, we show that function~$f$ is well-defined and bijective.

\begin{lemma}\label{lem:f-well-defined}
  Function~$f$ from \cref{def:f} is well-defined.
\end{lemma}

\begin{proof}
  To show that $f$ is well-defined, we need to show that for each given SCN profile~$P=\prefs$ with $n$ voters,
  $f(P)$ is an SSYT of order~$n-1$.
  That is, we have to show that $T\coloneqq f(P)$ fulfills the two conditions given in the beginning of \cref{subsec:SSYT}.

  By \cref{def:f}, 
  $f(P)$ has $n-1$ rows such that for each value~$i$, $1 \le i \le n-1$, the $i^{\text{th}}$ row has $(n-1)+1-i$ entries.
  Moreover, for each alternative~$i\in \{1,2,\ldots, n-1\}$ and each voter~$j\in \{1,2,\ldots, n-i\}$,
  the value~$n+1-j$ ranges from $n$ to~$i+1$.
  Thus, the position~$\pos(\pref_{n+1-j}, i)$ of alternative~$i$ in the preference order of voter~$n+1-j$ is defined
  and, by the narcissistic property, has a value between $2$ and $n$.
  This means that the value of $T(i,j)$,
  which is defined as $n+1-\pos(\pref_{n+1-j}, i)$,
  is between $n-1$ and $1$.
  
  Second, by the last statement in \cref{cor:prelim-sc},
  the positions of the alternative~$i\in \{1,2,\ldots, n-1\}$
  in the preference orders of the voters are non-decreasing along the voter order~$i+1 \triangleright i+2 \triangleright \cdots \triangleright n$.
  By the double negation in the definition of $T(i,j)$, 
  this implies that the values along the $i^{\text{th}}$ row in $T$ do not decrease.

  It remains to show that the values down each column in $T$ strictly increase.
  The entries in each column~$j$ reflect the positions of the alternatives~$1$ to~$n-j$ 
  in the preference order of voter~$n+1-j$.
  By the second statement in \cref{cor:prelim-sc}, 
  it follows that the positions of the alternatives $1$ to $n-j$ strictly decrease and by the negation in the definition of~$T(i,j)$,
  we have that the values down each column in $T$ indeed strictly increase.
\end{proof}

\begin{lemma}\label{lem:f-bijective}
   Function~$f$ from \cref{def:f} is bijective.
\end{lemma}
\begin{proof}
  To show that $f$ is injective,
  consider two arbitrary SCN preference profiles~$\ppp=(\pref_1,\pref_2,\dots, \pref_n)$ 
  and $\ppp'=(\pref'_1,\pref'_2,\dots,\pref'_n)\in \mathbb{P}_{n}$
  that are single-crossing with respect to the linear order~$1\triangleright 2 \triangleright \dots \triangleright n$
  such that $f(\ppp)=f(\ppp')$.
  This means that for each column~$j\in \{1,2,\dots, n-1\}$ and each row~$i\in \{1,2,\dots, n-j\}$,
  we have that $f(\ppp)(i,j) = f(\ppp')(i,j)$,
  meaning that the position~$\pos(\pref_{n+1-j}, i) $ of each alternative~$i$ in the preference order~$\pref_{n+1-j}$ is the same as that~$\pos(\pref'_{n+1-j}, i)$ in the preference order~$\pref'_{n+1-j}$.
  By the single-peaked and narcissistic property, the preference order of each voter~$n+1-j$ is determined by the positions of the alternatives~$1$ to $n-j$. 
  Thus, the preference orders~$\pref_{n+1-j}$ are the same as the preference order~$\pref'_{n+1-j}$.
  Since the first voter always has the preference order~$1\pref 2\pref \dots \pref n$ in all profiles of $\mathbb{P}_n$, 
  we obtain that  $\ppp=\ppp'$.

  It remains to show that $f$ is surjective.
  For each SSYT~$T$ of order~$n-1$,
  there is an SCN preference profile~$\ppp=\prefs$ with the following form:
  \begin{align*}
    \text{voter~}1 &\colon 1 \pref_1 2 \pref_1  \cdots \pref_1 n\text{,} \\
    \text{voter~}2 &\colon 2 \pref_2 \cdots\text{,} \\
    \cdots\\
    \text{voter~}n &\colon n \pref_n n-1 \pref_n \cdots \pref_n 1\text{.}
  \end{align*}

  \noindent Formally, the first voter has preference order $1\pref_1 2\pref_1 \cdots \pref_1 n$, 
  and for each voter~$i\in \{2,\ldots, n\}$, 
  we first let $\pos(\pref_i,i)=1$ and then define
  her preference order~$\pref_i$ by defining the positions of the alternative~$j \in \{i-1,i-2,\ldots, 1\}$:
  $\pos(\pref_i, j)\coloneqq n+1 - T(j,n+1-i)$, which is at least two;
  observe that by the definition of SSYT, we have that $i \pref_i i-1 \pref_i \cdots \pref_i 1$.
  The remaining positions in the preference order~$\pref_i$ are assigned to the remaining alternatives $i+1, i+2,\ldots, n$ 
  such that $i+1 \pref_i i+2 \pref_i \dots \pref_i n$.
  Note that no two positions~$\pos(\pref_{i},j)$ and $\pos(\pref_i, j')$ with $1\le j < j'\le i$ are the same as no two entries in a column in $T$ have the same values,
  which means that we indeed obtain a preference order.
  
  Now, we show by contradiction that the constructed profile~$\ppp$ is single-crossing with respect to the order~$1 \rhd 2 \rhd \dots \rhd n$.
  Suppose, for the sake of contradiction, that there are two alternatives~$i$ and $j$ and three voters~$a, b$, and $c$ with $a < b < c$
  such that 
  \begin{alignat}{2}
    \label{pref:a} &\defvoter~a\colon &&i \pref_a j \text{,}\\
    \label{pref:b} &\defvoter~b\colon& &j\pref_b i\text{, and}\\
    \label{pref:c} &\defvoter~c \colon &&i \pref_c j\text{.}
  \end{alignat}
  First of all, we show the following three auxiliary statements which will be used many times in our proof:
  
  \begin{claim}\label{claim:1}
    Let $x$ and $y$ be two distinct alternatives from $\{1,2,\dots,n\}$ such that $x<y$, 
    and let $p$ be a voter from $\{1,2,\dots,n\}$ with preference order~$\pref_p$ from the constructed preference profile~$\ppp$.
    Then, the following holds.
    \begin{enumerate}[(i)]
      \item\label{claim:11} If $y \le p$, then $\pos(\pref_p,x)>\pos(\pref_p,y)$.
      \item\label{claim:12} If $x \ge p$, then $\pos(\pref_p,x)<\pos(\pref_p,y)$.
    \end{enumerate}
  \end{claim}
  
  \begin{proof}[Proof of \cref{claim:1}] \renewcommand{\qedsymbol}{(of
      \cref{claim:1})~$\diamond$} 
    Assume that $y < p$. Then, from the definition of $\ppp$, 
    we know that $T(x,n+1-p)$ and $T(y,n+1-p)$ are defined and
    the property of SSYT implies that $T(x,n+1-p)<T(y,n+1-p)$ since $x<y$.
    By the definition of the positions~$\pos$,
    we immediately have that $\pos(\pref_p,x)>\pos(\pref_p,y)$.
    If $y=p$, then $\pos(\pref_p,y) = 1 <2 \le \pos(\pref_p,x)$ .
    Together, we showed the first statement.

    If $x\ge p$, then by the definition of the positions of all $j$ with $j\ge p$ in $\pref_p$,
    we have $\pos(\pref_p,x)<\pos(\pref_p,y)$.
  \end{proof}

  \begin{claim}\label{claim:2}
    Let $x$ and $y$ be two distinct alternatives from $\{1,2,\dots,n\}$  such that 
    $x<y$, 
    and let $p$ be a voter from $\{1,2,\dots,n\}$ with preference order~$\pref_p$ from the constructed preference profile~$\ppp$,
    such that $x\le p < y$.
    The following holds.
    \begin{enumerate}[(i)]
      \item\label{claim:21} If $y\pref_p x$, then $\pos(\pref_p,x) > y-x$.
      \item\label{claim:22} If $x\pref_p y$, then $\pos(\pref_p,x)\le y-x$.
    \end{enumerate}
  \end{claim}
  
  \begin{proof}[Proof of \cref{claim:2}] \renewcommand{\qedsymbol}{(of
      \cref{claim:2})~$\diamond$} 
    Both statements obviously hold for $p=x$ (this includes the case of $p=1$), 
    since $x=p\pref_p y$ (each voter is narcissistic), 
    implying that $\pos(\pref_p,x)=1\le y-x$.

    In the remainder of the proof, we assume that $p\in \{2,3,\dots, n\}$ and $x< p$.
    First of all, the relation~$x < p$ implies that for each integer~$x'$ with $x+1 \le x' \le p-1$, the entry~$T(x',n+1-p)$ is defined. 
    To compute the value of~$\pos(\pref_p,x)$,
    we partition the set~$\{x+1,x+2,\dots,y\}$ into two disjoint subsets~$V_1\uplus V_2$
    with  $V_1\coloneqq \{x+1,\ldots, p-1\}$ and $V_2\coloneqq \{p,p+1,\dots,y\}$.
    We know that for every alternative~$x'$ with $x' \in V_1$,
    the value~$T(x',n+1-p)$ is defined, and, 
    by the property that the entries of each column in $T$ are strictly increasing, 
    for each $x'\in V_1$, it holds that
    \begin{align}\label{eq:V_1}
      T(x',n+1-p)>T(x,n+1-p) \text{, implying } \pos(\pref_p,x') < \pos(\pref_p,x)\text{.}
    \end{align}
    
    To show the first statement, assume that $y \pref_p x$.
    Then, by the definition of the preference order of voter~$p$, 
    for each alternative~$y'\in V_2$, it holds that $\pos(\pref_p,y')\le \pos(\pref_p,y)<\pos(\pref_p,x)$.
    Note that $\pos(\pref_p,y)<\pos(\pref_p,x)$ holds by our assumption that $y \pref_p x$.
    Together with the positions of all alternatives from $V_1$ (see \eqref{eq:V_1}), 
    there are at least $|V_1|+|V_2|$~alternatives preferred to $x$ by voter~$p$; therefore,
    we have $\pos(\pref_p,x) > |V_1|+|V_2|=y-x$.

    To show the second statement, assume that $x\pref_p y$. 
    Then, by the definition of the preference order of voter~$p$,
    we have the following.
    \begin{enumerate}
      \item for each alternative~$x'\in \{1,2,\dots, x-1\}$, it holds that $\pos(\pref_p, x') > \pos(\pref_p, x)$, and
      \item for each alternative~$y'\in \{y,y+1,\dots, n\}$, it holds that $\pos(\pref_p, y') \ge \pos(\pref_p, y) > \pos(\pref_p,x)$ (note that the second inequality holds by our assumption that $x\pref_p y$).
    \end{enumerate}
    This implies that at least $x-1+n-y+1=n+x-y$ alternatives have a larger position than alternative~$x$, 
    therefore $\pos(\pref_p, x)\le n-(n+x-y)=y-x$.
  \end{proof}

  \begin{claim}\label{claim:3}
    Let $x$ be an alternative from $\{1,2,\dots,n\}$ and let $p,q$ be two distinct voters from~$\{1,2,\dots,n\}$ such that $x<p<q$.
    Then, $\pos(\pref_p,x) \le \pos(\pref_q,x)$.
  \end{claim}
   \begin{proof}[Proof of \cref{claim:3}] \renewcommand{\qedsymbol}{(of
      \cref{claim:3})~$\diamond$} 
    Since $x<p<q$, the values~$T(x,n+1-p)$ and $T(x,n+1-q)$ are defined.
    By the weakly increasing property of each row in $T$, 
    the relation~$p<q$ implies that $\pos(\pref_p,x)=n+1-T(x,n+1-p) \le n+1-T(x,n+1-q)=\pos(\pref_q,x)$. 
  \end{proof}

  Now, we move on to our proof with case distinction on the relation between $i$ and $j$.

  Assume that $i<j$, and define $x=i$ and $y=j$.
  Then, \cref{claim:1}(\ref{claim:12}) and \eqref{pref:b} imply $i<b$ (applying $p=b$),
  and \cref{claim:1}(\ref{claim:11}) and \eqref{pref:c} imply $j>c$ (applying $p=c$).
  This means that $i<b<j$ and $i<c<j$ since $i<b<c<j$.
  If we use \cref{claim:2}(\ref{claim:21}) for the preference order given by \eqref{pref:b}, 
  then applying $p=b$,
  we have $\pos(\pref_b,i) > j-i$.
  However, if we use \cref{claim:2}(\ref{claim:22}) for the preference order given by \eqref{pref:c},
  then by applying $p=c$, we have $\pos(\pref_c,i)\le j-i$.
  This implies that $\pos(\pref_b,i)>\pos(\pref_c,i)$---a contradiction to \cref{claim:3} since $i<b<c$.
  
  Analogously,  $j<i$ yields a contradiction when we consider the preference orders~\eqref{pref:a} and \eqref{pref:b} instead.
  Define $x=j$ and $y=i$.
  \cref{claim:1}(\ref{claim:12}) and \eqref{pref:a} imply $j<a$ (applying $p=a$),
  and 
  \cref{claim:1}(\ref{claim:11}) and \eqref{pref:b} imply $i>b$ (applying $p=b$).
  This means that $j<a<i$ and $j<b<i$ since $j<a<b<i$.
  If we use \cref{claim:2}(\ref{claim:21}) for the preference order given by \eqref{pref:a}, 
  then by applying $p=a$,
  we have $\pos(\pref_a,j) > i-j$.
  However, if we use \cref{claim:2}(\ref{claim:22}) for the preference order given by \eqref{pref:b},
  then by applying $p=b$, 
  we have $\pos(\pref_b,j)\le i-j$.
  This implies that $\pos(\pref_a,j)>\pos(\pref_b,j)$---a contradiction to \cref{claim:3} since $j<a<b$.

  Summarizing, we showed that $1\rhd 2\rhd \dots \rhd n$ is indeed a single-crossing order.
  This implies that $f$ is also surjective, and thus bijective.
\end{proof}

Applying the inverse of function~$f$ given in \cref{def:f} on the SSYT produced in \cref{ex:f}
and assigning the remaining positions to the other remaining alternatives according to the proof of \cref{lem:f-bijective},
we will obtain our original profile from \cref{ex:profile}.
By \cref{thm:ssyt} and \cref{lem:f-well-defined,lem:f-bijective}, 
we obtain our second main result.

\begin{theorem}\label{thm:nsc}
  The number of narcissistic profiles for $n$ voters ($n\ge 2$)
  that are single-crossing with respect to the order~$1\triangleright 2 \triangleright \dots \triangleright n$ 
  is $2^{\binom{n-1}{2}}$.
\end{theorem}

\begin{proof}
  Let $\mathbb{P}_n$ be the set of all SCN profiles with $n$ voters that 
  are single-crossing with respect to the order~$1\triangleright 2 \triangleright \dots \triangleright n$, 
  and let $\mathbb{S}_{n-1}$ be the set of all SSYTs of order $n-1$.
  
  It is clear that both~$\mathbb{P}_n$ and $\mathbb{S}_{n-1}$ are finite.
  Since \cref{def:f} defines a function $f\colon \mathbb{P}_n \to \mathbb{S}_{n-1}$ that is a bijection (see \cref{lem:f-well-defined,lem:f-bijective}), 
  $\mathbb{P}_n$ and $\mathbb{S}_{n-1}$ have the same cardinality.
  By \cref{thm:ssyt}, we obtain the desired cardinality for~$\mathbb{P}_n$.
\end{proof}

\section{Conclusion}
We studied the numbers of narcissistic profiles that are also single-peaked (SPN) or also single-crossing (SCN), respectively.
We established a bijective relation between semi-standard Young tableaux and SCN profiles. 
By counting the number of semi-standard Young tableaux, 
we determined the number of SCN profiles.
In this paper, we focused on profiles with the same number of voters and alternatives .
However, our analysis could be extended to the case where the number of voters is greater than the number of alternatives
since the last statement of \cref{cor:prelim-sc} still holds in this case and it corresponds to the essential property of an SSYT for an arbitrary Ferrers diagram.

We focused on profiles that are single-peaked or single-crossing with respect to the linear order~$\triangleright\colon 1\triangleright 2 \triangleright \dots \triangleright n$.
An interesting question is to count the SPN (resp.\ SCN) profiles that are different up to renaming.
Herein, two profiles are said to be the same if one can be obtained from the other by renaming the voters. 
It is clear that under such restriction, the number is significantly smaller than the one we studied in the current paper.
While it seems quite straightforward to obtain the result for SPN profiles that are unique up to renaming,
this is not the case for SCN profiles.
We consider the corresponding study to be a part of future research.

\section*{Acknowledgments}

We thank the anonymous reviewers from Discrete Mathematics for pointing out a shorter proof of \cref{thm:ssyt} and for improving the presentation of the paper.

We thank Robert Bredereck (TU Berlin, Germany) for initial discussion on this project and Laurent Bulteau (Laboratoire d'Informatique Gaspard Monge in Marne-la-Vall{\'e}e, France) for some important references while Jiehua Chen was visiting him in March 2016; the visit was funded by Laboratoire d'Informatique Gaspard Monge in Marne-la-Vall{\'e}e, France.

The main work was done while Jiehua Chen was with TU Berlin, Germany. 
While with Ben-Gurion University of the Negev, she was supported by the People Programme (Marie Curie Actions) of the European Union's Seventh Framework Programme (FP7/2007-2013) under REA grant agreement number~631163.11, 
and by the
Israel Science Foundation (grant no.\ 551145/14).

\bibliographystyle{abbrvnat}


\begin{thebibliography}{28}
\providecommand{\natexlab}[1]{#1}
\providecommand{\url}[1]{\texttt{#1}}
\expandafter\ifx\csname urlstyle\endcsname\relax
  \providecommand{\doi}[1]{doi: #1}\else
  \providecommand{\doi}{doi: \begingroup \urlstyle{rm}\Url}\fi

\bibitem[Abello(1991)]{Abello1991}
J.~Abello.
\newblock The weak {B}ruhat order of {$\text{S}_\Sigma$}, consistent sets, and
  {C}atalan numbers.
\newblock \emph{SIAM Journal on Discrete Mathematics}, 4\penalty0 (1):\penalty0
  1--16, 1991.

\bibitem[Arrow et~al.(2002)Arrow, Sen, and Suzumura]{ArrSenSuz02}
K.~J. Arrow, A.~K. Sen, and K.~Suzumura, editors.
\newblock \emph{Handbook of Social Choice and Welfare, Volume 1}.
\newblock North-Holland, 2002.

\bibitem[Ballester and Haeringer(2011)]{BaHa2011}
M.~{\'A}. Ballester and G.~Haeringer.
\newblock A characterization of the single-peaked domain.
\newblock \emph{Social Choice and Welfare}, 36\penalty0 (2):\penalty0 305--322,
  2011.

\bibitem[Barber{\`a} and Moreno(2011)]{BaMo2011}
S.~Barber{\`a} and B.~Moreno.
\newblock Top monotonicity: A common root for single peakedness, single
  crossing and the median voter result.
\newblock \emph{Games and Economic Behavior}, 73\penalty0 (2):\penalty0
  345--359, 2011.

\bibitem[Bartholdi~III and Trick(1986)]{BarTri1986}
J.~J. Bartholdi~III and M.~Trick.
\newblock Stable matching with preferences derived from a psychological model.
\newblock \emph{Operations Research Letters}, 5\penalty0 (4):\penalty0
  165--169, 1986.

\bibitem[Black(1948)]{Black1948}
D.~Black.
\newblock On the rationale of group decision making.
\newblock \emph{Journal of Political Economy}, 56\penalty0 (1):\penalty0
  23--34, 1948.

\bibitem[Bona(2004)]{Bona2004}
M.~Bona.
\newblock \emph{Combinatorics of Permutations}.
\newblock Chapman and Hall/CRC, 2004.

\bibitem[Brams et~al.(2002)Brams, Jones, and Kilgour]{BraJonKil2002}
S.~J. Brams, M.~A. Jones, and D.~M. Kilgour.
\newblock Single-peakedness and disconnected coalitions.
\newblock \emph{Journal of Theoretical Politics}, 14\penalty0 (3):\penalty0
  359--383, 2002.

\bibitem[Bredereck et~al.(2013)Bredereck, Chen, and Woeginger]{BreCheWoe2013a}
R.~Bredereck, J.~Chen, and G.~J. Woeginger.
\newblock A characterization of the single-crossing domain.
\newblock \emph{Social Choice and Welfare}, 41\penalty0 (4):\penalty0 989--998,
  2013.

\bibitem[Bredereck et~al.(2016)Bredereck, Chen, and Woeginger]{BreCheWoe2016}
R.~Bredereck, J.~Chen, and G.~J. Woeginger.
\newblock Are there any nicely structured preference profiles nearby?
\newblock \emph{Mathematical Social Sciences}, 79:\penalty0 61--73, 2016.

\bibitem[Bredereck et~al.(2017)Bredereck, Chen, Finnendahl, and
  Niedermeier]{BreCheFinNie2017}
R.~Bredereck, J.~Chen, U.~P. Finnendahl, and R.~Niedermeier.
\newblock Stable roommate with narcissistic, single-peaked, and single-crossing
  preferences.
\newblock In \emph{Proceedings of the 5th International Conference on
  Algorithmic Decision Theory (ADT~'17)}, pages 315--330. Springer, 2017.

\bibitem[Chen et~al.(2017)Chen, Pruhs, and Woeginger]{ChePruWoe2017}
J.~Chen, K.~Pruhs, and G.~J. Woeginger.
\newblock The one-dimensional {E}uclidean domain: {F}initely many obstructions
  are not enough.
\newblock \emph{Social Choice and Welfare}, 48\penalty0 (2):\penalty0 409--432,
  2017.

\bibitem[Coombs(1964)]{Coombs1964}
C.~H. Coombs.
\newblock \emph{A Theory of Data}.
\newblock John Wiley and Sons, 1964.

\bibitem[Demange(1994)]{Demange1994}
G.~Demange.
\newblock Intermediate preferences and stable coalition structures.
\newblock \emph{Journal of Mathematical Economics}, 23\penalty0 (1):\penalty0
  45--58, 1994.

\bibitem[Doignon and Falmagne(1994)]{DoiFal1994}
J.~Doignon and J.~Falmagne.
\newblock A polynomial time algorithm for unidimensional unfolding
  representations.
\newblock \emph{Journal of Algorithms}, 16\penalty0 (2):\penalty0 218--233,
  1994.

\bibitem[Elkind et~al.(2012)Elkind, Faliszewski, and Slinko]{ElkFalSli2012}
E.~Elkind, P.~Faliszewski, and A.~Slinko.
\newblock Clone structures in voters' preferences.
\newblock In \emph{Proceedings of the 13th ACM Conference on Electronic
  Commerce (EC~'12)}, pages 496--513. ACM~Press, 2012.

\bibitem[Elkind et~al.(2016)Elkind, Lackner, and Peters]{ElkLacPet2016}
E.~Elkind, M.~Lackner, and D.~Peters.
\newblock Preference restrictions in computational social choice: Recent
  progress.
\newblock In \emph{Proceedings of the 25th International Joint Conference on
  Artificial Intelligence (IJCAI~'16)}, pages 4062--4065, 2016.

\bibitem[Escoffier et~al.(2008)Escoffier, Lang, and
  {\"O}zt{\"u}rk]{EscLanOez2008}
B.~Escoffier, J.~Lang, and M.~{\"O}zt{\"u}rk.
\newblock Single-peaked consistency and its complexity.
\newblock In \emph{Proceedings of the 18th European Conference on Artificial
  Intelligence (ECAI~'08)}, pages 366--370. IOS Press, 2008.

\bibitem[Fulton(1997)]{Fulton1997}
W.~Fulton.
\newblock \emph{{Y}oung Tableaux: {W}ith Applications to Representation Theory
  and Geometry}.
\newblock Cambridge University Press, 1997.

\bibitem[Galambos and Reiner(2008)]{GaRe2008}
{\'A}.~Galambos and V.~Reiner.
\newblock Acyclic sets of linear orders via the {Bruhat} orders.
\newblock \emph{Social Choice and Welfare}, 30\penalty0 (2):\penalty0 245--264,
  2008.

\bibitem[Karlin(1968)]{Karlin1968}
S.~Karlin.
\newblock \emph{Total Positivity}.
\newblock Stanford University Press, 1968.

\bibitem[Lackner and Lackner(2017)]{LacLac2017}
M.-L. Lackner and M.~Lackner.
\newblock On the likelihood of single-peaked preferences.
\newblock \emph{Social Choice and Welfare}, 48\penalty0 (4):\penalty0 717--745,
  2017.

\bibitem[Mirrlees(1971)]{Mirrlees1971}
J.~A. Mirrlees.
\newblock An exploration in the theory of optimal income taxation.
\newblock \emph{Review of Economic Studies}, 38:\penalty0 175--208, 1971.

\bibitem[Roberts(1977)]{Roberts1977}
K.~W. Roberts.
\newblock Voting over income tax schedules.
\newblock \emph{Journal of Public Economics}, 8\penalty0 (3):\penalty0
  329--340, 1977.

\bibitem[Saporiti and Tohm{\'e}(2006)]{ST06}
A.~Saporiti and F.~Tohm{\'e}.
\newblock Single-crossing, strategic voting and the median choice rule.
\newblock \emph{Social Choice and Welfare}, 26\penalty0 (2):\penalty0 363--383,
  2006.

\bibitem[Stanley(1999)]{Stanley1999}
R.~P. Stanley.
\newblock \emph{Enumerative Combinatorics}, volume 2nd.
\newblock Cambridge University Press, 1999.

\bibitem[Yong(2007)]{Yong2007}
A.~Yong.
\newblock What is...a {Y}oung tableau?
\newblock \emph{Notices of the American Mathematical Society}, 54\penalty0
  (2):\penalty0 240--241, 2007.

\bibitem[Young(1928)]{Young1928}
A.~Young.
\newblock On quantitative substitutional analysis.
\newblock In \emph{Proceedings of the London Mathematical Society}, volume~28,
  pages 255--292, 1928.

\end{thebibliography}
\newcommand{\bibremark}[1]{\marginpar{\tiny\bf#1}}

\end{document}